\documentclass[11pt]{amsart}
\usepackage[a4paper, margin=2.54cm]{geometry}

\usepackage[english]{babel} 
\usepackage{microtype}  % better spacing around words etc
\usepackage{amsmath, amssymb, amsthm}  % math
\usepackage{mathtools}  % \coloneqq
\usepackage{centernot}  % \certernot 
\usepackage{todonotes}  % \todo
\usepackage{enumitem}   % nicer enumerations, for example:
\usepackage{graphicx}   % Required for inserting images
\usepackage{xcolor}     % colored text

\usepackage[english=american,autostyle]{csquotes}  % americal "quoation" marks
\MakeOuterQuote{"}

\usepackage[
    backend=biber,
    style=alphabetic,
    giveninits=false, 
    isbn=false,
    doi=false,
    url=true,
    maxbibnames=10,
]{biblatex}

\DeclareFieldFormat[article,inbook,incollection,inproceedings,patent,thesis,unpublished]{title}{#1}
\DeclareFieldFormat[inbook, article,inproceedings,incollection,unpublished,thesis]{title}{\mkbibitalic{#1}}
\DeclareFieldFormat[book]{title}{#1}
\DeclareFieldFormat[inproceedings]{booktitle}{\mkbibitalic{#1}}
\DeclareFieldFormat[inbook, incollection]{booktitle}{#1}
\DefineBibliographyStrings{english}{
  byeditor = {edited by},
  editor   = {editor},
  editors  = {editors},
  chapter = {chapter}
}
\DeclareFieldFormat[article]{volume}{\textbf{#1}}
\DeclareFieldFormat[article]{number}{(#1)}
\renewbibmacro*{journal+issuetitle}{%
  \usebibmacro{journal}%
  \setunit*{\addspace}%
  \printfield{volume}%
  \printfield{number}%
  \setunit{\addcomma\space}%
  \usebibmacro{date}%
  \setunit{\addcomma\space}%
  \usebibmacro{issue}%
  \newunit}
\DeclareFieldFormat{url}{\url{#1}}
\theoremstyle{plain}
\newtheorem{theorem}{Theorem}[section]  % or [chapter] if using chapters
\newtheorem{lemma}[theorem]{Lemma}

\newtheorem{observation}[theorem]{Observation}

\theoremstyle{definition}
\newtheorem{definition}[theorem]{Definition}

\newtheorem*{example*}{Example}

\theoremstyle{remark}
\newtheorem*{remark}{Remark}

\let\emptyset\varnothing

\newcommand{\ord}{\mathrm{On}}

\newcommand{\goto}{\Rightarrow}

\renewcommand{\mid}{\,|\,}
\newcommand{\set}[1]{\{#1\}}
\newcommand{\xset}[2]{\{#1 \mid #2\}}

\newcommand{\inseg}[1]{\mathord{(\!\leftarrow,\,#1)}}

\newcommand{\restr}{\restriction}

\newcommand{\h}{\operatorname{H}}  % height of a tree
\newcommand{\rr}{\operatorname{r}}  % root of a tree
\newcommand{\type}{\operatorname{tp}}  % type of a tree
\newcommand{\col}{\operatorname{col}}  % collapse of a tree
\usepackage[unicode]{hyperref} 
\hypersetup{pdftitle=Well-quasi-ordering infinite trees by homomorphisms}
\hypersetup{pdfauthor=Jakub Smolík}

\title{Well-quasi-ordering infinite trees by homomorphisms}
\author{Jakub Smolík}
\date{July 2026}

\begin{document}

\begin{abstract}
    Assuming the axiom of choice, we show that a weakened version of Nash-Williams' theorem about infinite trees can be recovered while completely avoiding better-quasi-orderings. In particular, we give a direct proof that the class of all order-theoretic trees is well-quasi-ordered by the tree-homomorphism relation.
\end{abstract}

\maketitle

\section{Introduction}

\emph{Well-quasi-orderings} or \emph{wqo} are a natural generalization of the notion of well-ordered sets to quasi-orderings. A binary relation $\preceq$ on a set $Q$ is a \emph{quasi-order} if it is reflexive and transitive (if it is also antisymmetric, then it is a partial order). A quasi-order is wqo if given any infinite sequence $x_0,x_1,x_2,\dots$ of elements of $Q$, there are indices $i<j$ such that $x_i\preceq x_j$.

Well-quasi-orderings are an important tool in logic and computer science \cite{schuster2020logic}, as they provide termination arguments in algorithms and decidability problems. Moreover, they allow monotone properties to be characterized by a finite set of forbidden obstructions (a property $\varphi$ is \emph{monotone} if whenever it holds for some $x$, then it also holds for all smaller $y\preceq x$).

According to Kruskal~\cite{kruskal1972-history}, the concept of wqo has been rediscovered multiple times over the years. Its origins can be traced to a conjecture of Vázsonyi in the 1940s, which stated that finite trees are wqo by the topological minor relation. This conjecture is now commonly referred to as Kruskal’s tree theorem, after it was proved by Kruskal~\cite{kruskal1960tree-theorem} and independently by Tarkowski~\cite{Tarkowski1960indp-kruskal}.

Nash-Williams~\cite{nash-williams1965-infinite-trees} extended Kruskal's theorem and proved that all (finite or infinite) trees are wqo by the topological minor relation. In order to achieve this foundational result, he introduced the stronger notion of \emph{better-quasi-orderings}, or \emph{bqo}, and showed that rooted trees are bqo by the \emph{homeomorphic embedding} relation, a variant of topological minors for rooted trees. Since every bqo is also a wqo, this immediately implies the wqo property for all trees.

A second corollary of this result is that order-theoretic trees are wqo by the weaker \emph{tree-homomorphism} relation. 
Assuming the axiom of choice, we give a direct proof of this corollary while completely avoiding the heavy machinery of better-quasi-orderings.

\section{Order-theoretic trees}

We will work with order-theoretic trees, a useful formalization of rooted trees. 

\begin{definition}
    A \emph{tree} is a partially ordered set $(T,<_T)$ such that for every $x\in T$, the set
    \[
    \inseg x\coloneqq \xset{y\in T}{y<_T x}
    \]
    is a finite chain, and there exists a unique minimal element called the \emph{root} of $T$.
\end{definition}

The elements $x\in T$ of a tree are called the \emph{nodes} of $T$.
In graph-theoretic terms, $y<_T x$ means that $y$ lies on the unique path $\inseg x$ from $x$ to the root.
A \emph{subtree} of a tree $(T,<_T)$ is any subset $S\subseteq T$ together with the inherited order that is also a tree (has a unique minimal element). We will often simply write $T$ instead of $(T,<_T)$ and $<$ instead of $<_T$ if the order is clear from the context. Two trees $T$ and $S$ are \emph{isomorphic} if they are order-isomorphic as partially ordered sets.

The subtree \emph{sprouting} from $x\in T$ is the subtree $T_x\coloneqq\xset{y\in T}{x\le y}$. A node $y$ is called a \emph{predecessor} of $x$ if $y<x$, a \emph{successor} of $x$ if $x< y$, and an \emph{immediate successor} of $x$ if it is a $\le$-minimal successor of $x$. By nodes \emph{below} and \emph{above} $x$, we mean the set of predecessors and successors of $x$, respectively.  A node $x$ is called a \emph{leaf} if it has no immediate successors. The \emph{branching factor} of $x$ is the cardinality of the set of its immediate successors.

The \emph{height} or \emph{level} of a node $x$ is the cardinality of the set $\inseg{x}$, and we denote it by $|x|_T$. Note that the level of the root is zero. The \emph{height} of a tree $T$ is the ordinal number $\h(T)\coloneqq \sup\xset{|x|_T+1}{x\in T}$. Note that if this supremum is not a maximum, then $\h(T)=\omega$, as under our definition all nodes have finite level. 

The \emph{meet} $x\land_T y$ of two nodes $x$ and $y$ is the infimum of the set $\set{x,y}$ with respect to the tree order. In graph-theoretic terms, $x\land_T y$ is the closest common ancestor of $x$ and $y$.

A \emph{branch} of a tree $T$ is a $\subseteq$-maximal chain $B\subseteq T$. %It is easy to show using Zorn's lemma that every chain can be extended into a branch.
The \emph{length} of a branch $B$ is its ordinal type with respect to the tree order, which for us is simply $|B|$. It is easy to see that every branch has length at most $\h(T)$. A \emph{cofinal branch} is a branch of length~$\h(T)$.

\section{The tree-homomorphism order}

Denote the class of all trees by $\mathsf T_\omega$ and the class of all trees without an infinite branch by $\mathsf T_{<\omega}$. Furthermore, denote the set of all leaves of a tree $T$ by $\mathcal L(T)$.

\begin{definition}\label{def:embedding}
    Let $S,T\in \mathsf T_\omega$. A \emph{homomorphism} from $S$ to $T$ is a map $\varphi\colon S\to T$ that satisfies $x<_{S}y\implies \varphi(x)<_{T}\varphi(y)$ for all $x,y\in S$. A homomorphism $\varphi$ is
    \begin{enumerate}[label=(\arabic*)]
        \item \emph{level-preserving} if $|x|_S=|\varphi(x)|_T$ holds for all $x\in S$. In particular, if $y$ is an immediate successor of $x$, then $\varphi(y)$ is an immediate successor of $\varphi(x)$,
        \item \emph{leaf-preserving} if whenever $x$ is a leaf of $S$, then $\varphi(x)$ is a leaf of $T$,
        \item a \emph{homeomorphic embedding} if it is injective and meet-preserving; that is, 
        \[
        \varphi(x\land_S y)=\varphi(x)\land_T\varphi(y)
        \]
        holds for all $x,y\in S$. 
    \end{enumerate}
\end{definition}

\begin{definition}[Tree-homomorphism]\label{def:tree-hom}
    The \emph{tree-homomorphism} order $\preceq_h$ on $\mathsf T_\omega$ can be characterized in either of the two following equivalent ways:
    \begin{enumerate}[label={(\alph*)}]
        \item\label{item:treehom1} $S\preceq_h T$ $\iff$ there exists a homomorphism from $S$ to $T$, or
        \item\label{item:treehom2} $S\preceq_h T$ $\iff$ there exists a level-preserving homomorphism from $S$ to $T$.
    \end{enumerate}
    When restricted to $\mathsf T_{<\omega}$ and if assuming the axiom of choice, we obtain a third equivalent characterization:
    \begin{enumerate}[label={(\alph*)}]
       \setcounter{enumi}{2}
       \item\label{item:treehom3} $S\preceq_h T$ $\iff$ there exists a leaf-preserving homomorphism from $S$ to $T$.
    \end{enumerate}
\end{definition}

\begin{observation}
    The above characterizations are indeed equivalent.
\end{observation}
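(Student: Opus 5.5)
We prove (a)$\iff$(b) on all of $\mathsf T_\omega$, and then (b)$\iff$(c) on $\mathsf T_{<\omega}$ using choice. Since level-preserving and leaf-preserving homomorphisms are in particular homomorphisms, the directions (b)$\Rightarrow$(a) and (c)$\Rightarrow$(a) are immediate. So the work is to turn an arbitrary homomorphism into a level-preserving one, and a level-preserving one into a leaf-preserving one.

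For (a)$\Rightarrow$(b), let $\varphi\colon S\to T$ be a homomorphism. A homomorphism maps the chain $\inseg x$ of size $|x|_S$ injectively into the chain $\inseg{\varphi(x)}$, so $|\varphi(x)|_T\ge |x|_S$. Define
\[
\psi(x)\coloneqq\text{the unique } z\le_T\varphi(x) \text{ with } |z|_T=|x|_S.
\]
This exists because $\inseg{\varphi(x)}\cup\set{\varphi(x)}$ is a finite chain containing exactly one node of each level $0,\dots,|\varphi(x)|_T$. Clearly $\psi$ is level-preserving. To see that it is a homomorphism, take $x<_S y$. Then $\varphi(x)<_T\varphi(y)$, so $\psi(x)\le\varphi(x)<\varphi(y)$ and $\psi(y)\le\varphi(y)$. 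Both $\psi(x)$ and $\psi(y)$ therefore lie in the chain below $\varphi(y)$. Since $|\psi(x)|_T=|x|_S<|y|_S=|\psi(y)|_T$, we get $\psi(x)<_T\psi(y)$.

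For (b)$\Rightarrow$(c), restrict to $S,T\in\mathsf T_{<\omega}$ and let $\psi$ be level-preserving. Since $T$ has no infinite branch, every node $t$ lies below some leaf: by dependent choice, if no leaf were above $t$ we could climb forever through immediate successors and obtain an infinite chain, contained in an infinite branch. By the axiom of choice, fix a leaf $\ell(t)\ge_T t$ for each $t\in T$. Define $\chi(x)\coloneqq\psi(x)$ if $x$ is not a leaf of $S$, and $\chi(x)\coloneqq\ell(\psi(x))$ if $x$ is a leaf. Then $\chi$ is leaf-preserving. For $x<_S y$, the node $x$ is not a leaf, so $\chi(x)=\psi(x)<\psi(y)\le\chi(y)$, and $\chi$ is a homomorphism.

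For (c)$\Rightarrow$(a) nothing is needed. The main point requiring care is the first construction: a homomorphism need not preserve levels, and one must check that truncating $\varphi(x)$ to the level of $x$ remains order-preserving. The argument above handles this because $\varphi(x)$ and $\psi(y)$ both lie below $\varphi(y)$. The only use of choice is selecting leaves above nodes in the second construction, which is also where the absence of infinite branches is essential: in $\mathsf T_\omega$ a node may have no leaf above it.
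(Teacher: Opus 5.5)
Your proof is correct and follows the paper's argument: for (a)$\Rightarrow$(b) you truncate $\varphi(x)$ to the node below it at level $|x|_S$, and for the leaf-preserving version you use choice to send each leaf of $S$ to a leaf of $T$ lying above its image. The differences are cosmetic: you start the leaf step from a level-preserving map, but never use that property, so it is really the paper's (a)$\Rightarrow$(c); and you explicitly check $|\varphi(x)|_T\ge|x|_S$ and that the truncation is order-preserving, both of which the paper leaves implicit.
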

\begin{proof}
    Clearly both characterization \ref{item:treehom2} and \ref{item:treehom3} imply characterization \ref{item:treehom1}.

    \ref{item:treehom1}$\goto$\ref{item:treehom2} Given a homomorphism $\varphi\colon S\to T$, define a level-preserving homomorphism $\varphi'$ by mapping each $x\in S$ to the unique element of $\xset{y\in T}{y\le_T\varphi(x)}$ at height $|x|_S$. In graph theory terms, if every edge is mapped to a path instead of an edge, we can contract these paths to obtain edges.

    \ref{item:treehom1}$\goto$\ref{item:treehom3} When restricted to $\mathsf T_{<\omega}$, each branch contains a leaf. Hence, if $\varphi\colon S\to T$ is a homomorphism, then for all $x\in S$ there is some $y\in \mathcal L(T)$ such that $\varphi(x)\le_{T} y$. To obtain a leaf-preserving homomorphism, \emph{choose} such $y_x$ for every $x\in \mathcal L(S)$ and remap $\varphi(x)$ to be $y_x$.
\end{proof}

It is easy to see that the relation $\preceq_h$ is a quasi-order. Our main result is the following:

\begin{theorem}\label{thm:out-tees-order}
    The class $\mathsf T_\omega$ is wqo by the tree-homomorphism relation $\preceq_h$. 
\end{theorem}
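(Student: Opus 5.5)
The plan is to reduce the whole class $\mathsf T_\omega$ to ordinals via a rank function, which makes the wqo property immediate.

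\emph{Step 1: trees with an infinite branch are maximal.} Suppose $T$ has an infinite branch $B$. For every $S\in\mathsf T_\omega$ the map sending $x\in S$ to the unique node of $B$ at level $|x|_S$ is a homomorphism, since $x<_S y$ implies $|x|_S<|y|_S$. Thus $S\preceq_h T$ for every $S$. Now take any infinite sequence $T_0,T_1,\dots$. If some $T_j$ with $j\ge 1$ has an infinite branch, then $T_0\preceq_h T_j$ and we are done. Otherwise, discarding $T_0$, we obtain a sequence lying entirely in $\mathsf T_{<\omega}$. So it suffices to show that $\mathsf T_{<\omega}$ is wqo.

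\emph{Step 2: a rank on well-founded trees.} For $T\in\mathsf T_{<\omega}$ the relation ``$y$ is a successor of $x$'' is well-founded, because there is no infinite ascending chain. Define an ordinal rank by recursion:
\[
\rho_T(x)=\sup\xset{\rho_T(y)+1}{y \text{ an immediate successor of } x},
\]
so that leaves have rank $0$. Put $\rho(T)\coloneqq\rho_T(\rr(T))$.

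\emph{Step 3: $S\preceq_h T$ if and only if $\rho(S)\le\rho(T)$.}

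For the forward direction, let $\varphi$ be a homomorphism. I show $\rho_S(x)\le\rho_T(\varphi(x))$ by well-founded induction on $x$, starting at the leaves. If $y$ is an immediate successor of $x$, then $\varphi(y)>\varphi(x)$, so $\varphi(y)$ lies above some immediate successor $z$ of $\varphi(x)$. Since rank is monotone decreasing upward, $\rho_S(y)\le\rho_T(\varphi(y))\le\rho_T(z)<\rho_T(\varphi(x))$. Taking the supremum over all such $y$ gives the claim. Applying it to the root, together with $\rho_T(\varphi(\rr S))\le\rho(T)$, yields $\rho(S)\le\rho(T)$.

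For the converse, I build a level-preserving $\varphi$ by recursion on levels, maintaining the invariant $\rho_S(x)\le\rho_T(\varphi(x))$. Start with $\varphi(\rr S)=\rr T$. Given $x$ with $\varphi(x)=u$ satisfying the invariant, let $y$ be an immediate successor of $x$. Then $\rho_S(y)<\rho_S(x)\le\rho_T(u)$. I claim $u$ has an immediate successor $v$ with $\rho_T(v)\ge\rho_S(y)$. Indeed, if every immediate successor $v$ had $\rho_T(v)<\rho_S(y)$, then $\rho_T(v)+1\le\rho_S(y)$ for all of them. This would force $\rho_T(u)\le\rho_S(y)$, a contradiction. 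Using the axiom of choice, pick such a $v$ for each $y$ and set $\varphi(y)=v$. The resulting map is a homomorphism because it sends immediate successors to immediate successors.

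\emph{Step 4: conclusion.} Since ordinals are well-ordered, in any sequence of trees in $\mathsf T_{<\omega}$ there are indices $i<j$ with $\rho(T_i)\le\rho(T_j)$, and hence $T_i\preceq_h T_j$ by Step 3. Combined with Step 1, this proves the theorem.

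The main subtle point is the converse direction of Step 3. It requires the correct choice of invariant (rank inequality rather than equality) and the supremum argument to find a successor of large enough rank; this is also where the axiom of choice is used. The rest is routine.
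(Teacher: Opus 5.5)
Your proof is correct, and it takes a genuinely different and much shorter route than the paper. Your Step 1 is the same reduction the paper makes via Lemma~\ref{lem:p1-1}. After that the two proofs diverge.

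\textbf{The paper's route.} It does not use the ordinary ordinal rank of a well-founded tree. Instead it builds a coarser invariant, the type $\type(T)$, defined through bushy ordinals, $\alpha$-bushes and leaf replacement. It then needs a chain of lemmas: monotonicity of $\type$ under induced subtrees, invariance under deleting leaves, the bound via branch-trees, the fact that every tree has a type, and the properties of the $\alpha$-collapse. With these it proves, by transfinite induction on type, that each $\mathcal T(\alpha)$ is wqo. Along the way $\col_\gamma$ pushes sequences of equal successor type $\gamma+n+1$ down to type at most $\gamma+n$.

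\textbf{Your route.} Your rank $\rho$ is a complete invariant. You show that on $\mathsf T_{<\omega}$ one has $S\preceq_h T$ exactly when $\rho(S)\le\rho(T)$, so the wqo property follows at once from the well-ordering of the ordinals. This is the familiar characterization of rank comparison between well-founded trees via order-preserving maps.

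\textbf{What each buys.} Your approach gives a strictly stronger conclusion: $\preceq_h$ is a total pre-well-ordering of $\mathsf T_\omega$. Trees are ordered by rank, and all trees with an infinite branch form a single top class. The paper only obtains comparability for trees of different type (Lemma~\ref{lem:p1-6}\,\ref{item:p1-6a}) and among $\alpha$-bushes of the same bushy type. Your argument also makes the use of choice transparent: dependent choice for well-foundedness, and one choice of successors in the converse direction. The paper's hierarchy offers a structural decomposition into bushes and collapses rather than a complete invariant. For $\preceq_h$ itself, your argument is both simpler and sharper.

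\textbf{Two small gaps to tidy.} In the forward direction you only get $\varphi(y)\ge z$, not $\varphi(y)>z$; this is harmless, since rank is still non-increasing upward. You should also say why a tree in $\mathsf T_{<\omega}$ has no infinite ascending chain. Such a chain extends, by Zorn's lemma, to a maximal chain, which is an infinite branch.
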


Since every homeomorphic embedding is a homomorphism, Theorem~\ref{thm:out-tees-order} can be seen as an easy consequence of the following theorem of Nash-Williams~\cite{nash-williams1965-infinite-trees}.  

\begin{theorem}[Nash-Williams]
    The class $\mathsf T_\omega$ is wqo by the homeomorphic embedding relation.
\end{theorem}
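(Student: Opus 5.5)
The statement is Nash-Williams' theorem, which the paper only cites, so I sketch how I would prove it along his original route. I would prove the stronger claim that $\mathsf T_\omega$ is \emph{bqo} under homeomorphic embedding; wqo then follows because every bqo is wqo.

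\textbf{Setup.} I would generalise to trees whose nodes carry labels from an arbitrary bqo $Q$, with an embedding required to increase labels. This strengthening is what lets the induction go through.

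The key structural fact is that a rooted tree $T$ with root $r$ is determined by two pieces of data:
\begin{itemize}
\item the label of $r$;
\item the family $\mathcal F(T)=\xset{T_x}{x \text{ an immediate successor of } r}$.
\end{itemize}
Moreover, $S$ embeds homeomorphically into $T$ if either of the following holds:
\begin{itemize}
\item $S$ embeds into some $T_x$ with $x>r$;
\item the labels of the roots compare correctly, and $\mathcal F(S)$ maps injectively into $\mathcal F(T)$ so that each member of $\mathcal F(S)$ embeds into its image.
\end{itemize}
Indeed, distinct immediate successors sending to distinct immediate successors automatically preserves meets at the root.

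\textbf{Tools.} I would use the standard bqo machinery:
\begin{itemize}
\item barriers and $Q$-arrays $f\colon B\to Q$, where $f$ is bad if $f(s)\not\preceq f(t)$ whenever $s\lhd t$;
\item Nash-Williams' theorem that if $Q$ is bqo, then the class of arbitrary sets (or indexed families) of elements of $Q$ is bqo under injective domination;
\item closure of bqo under finite products and subsets.
\end{itemize}

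\textbf{Contradiction argument.} Suppose a bad array $f\colon B\to\mathsf T_\omega$ exists. I would pass to a \emph{minimal} bad array with respect to the relation
\[
S\sqsubset T \iff S=T_x \text{ for some } x>\rr(T).
\]
Once such a minimal $f$ is obtained, let $R$ be the set of all $T_x$ with $x$ an immediate successor of the root of some $f(s)$. Minimality forces $R$ to be bqo, since any bad array in $R$ could be glued into $f$ to give a bad array below it.

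Then the set-bqo theorem makes $\xset{\mathcal F(T)}{T\in R}$ bqo. Combined with the label of the root through a product, this shows that $s\mapsto f(s)$ itself is good, which is the contradiction.

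\textbf{Main obstacle.} The hard step is the existence of the minimal bad array. The relation $\sqsubset$ is \emph{not} well-founded on $\mathsf T_\omega$, because a tree with an infinite branch yields infinite descending chains $T\sqsupset T_{x_1}\sqsupset T_{x_2}\sqsupset\cdots$. The usual minimal-bad-array lemma therefore does not apply directly.

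My first attempt would follow Nash-Williams' treatment of infinite trees and use \emph{locally} minimal bad arrays. I would construct a sequence of bad arrays $f_0\ge f_1\ge\cdots$, each obtained from the previous one by replacing values with sprouting subtrees on a shrinking barrier. I would fix the initial segments step by step and take a limit array $f_\omega$. Then I would argue two things:
\begin{itemize}
\item $f_\omega$ is still bad. For this I would use that along a descending chain every $T_{x_n}$ embeds back into $T$, so badness transfers to the limit.
\item No further proper replacement of $f_\omega$ is bad.
\end{itemize}
If this limit argument breaks down, my fallback would be a rank-by-rank induction. First treat $\mathsf T_{<\omega}$, where $\sqsubset$ is well-founded. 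Then reduce a tree with infinite branches to the labelled tree of its "infinite-branch skeleton", whose complementary subtrees lie in $\mathsf T_{<\omega}$. Finally apply the $Q$-labelled case with $Q$ the bqo of $\mathsf T_{<\omega}$-valued sets.
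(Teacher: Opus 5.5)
The paper does not prove this theorem. It only cites Nash-Williams and proves the weaker statement for $\preceq_h$. There, trees with an infinite branch are trivial by Lemma~\ref{lem:p1-1}\,\ref{item:p1-1-b}, since everything of height at most $\omega$ maps homomorphically into a single branch. Under injective, meet-preserving embeddings no such collapse is available, and that is exactly where your sketch has a genuine gap.

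The minimal-bad-array argument you describe is the standard one. Granting the bqo black boxes, it proves the theorem for $\mathsf T_{<\omega}$, where $\sqsubset$ is well-founded. For trees with infinite branches you give no working argument. Your \emph{first attempt} has three problems:
\begin{enumerate}
\item The limit array $f_\omega$ is not defined. If the value at a fixed index is replaced infinitely often, it descends along a ray $x_1<x_2<\cdots$ of $f(s)$. The nested sprouts $T_{x_n}$ have empty intersection, so there is no limit tree.
\item Your justification of badness is misdirected. That each $T_{x_n}$ embeds into $T$ only shows $f_\omega$ lies below $f$. Badness of a limit array comes from every pair of values being frozen at some finite stage, which is precisely what fails here.
\item Local minimality in your sense cannot be achieved in general. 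If a value is the full binary tree $2^{<\omega}$ or a single ray, every proper sprout is isomorphic to it. Replacing it by such a sprout gives a bad array again, and there is no $\sqsubset$-minimal choice at that position. So ``no further proper replacement of $f_\omega$ is bad'' is unattainable whenever a value is isomorphic to one of its own proper sprouts. Without that minimality, the gluing argument showing that $R$ is bqo has nothing to contradict.
\end{enumerate}

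Your fallback is circular. The infinite-branch skeleton $\xset{x\in T}{T_x\notin\mathsf T_{<\omega}}$ (when nonempty) is a tree without leaves, all of whose branches are infinite; for $2^{<\omega}$ it is the whole tree. Applying the $Q$-labelled case to it therefore presupposes the labelled theorem for trees with infinite branches, which is the statement being proved. Moving the well-founded hanging parts into the labels only removes the part you could already handle; the ray-rich, possibly self-similar core remains untouched.

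As it stands, your proposal sketches a proof for $\mathsf T_{<\omega}$ (with bqo labels). The missing ingredient is a way to deal with rays that does not rely on $\sqsubset$-minimality: either a genuinely well-founded complexity measure on trees with rays, or a separate device for the parts of the trees lying along infinite branches. Supplying that is the real difficulty of the theorem.
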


However, the proof of this theorem relies on the heavy machinery of better-quasi-ordering theory. We will prove Theorem~\ref{thm:out-tees-order} while working only with well-quasi-orderings.

\section{Proof of the theorem}

The proof relies on the axiom of choice and we will not explicitly mention every time it is invoked. 

The outline of the proof is as follows. We first observe that the theorem trivially holds for sequences of trees containing a cofinal branch, leaving sequences of trees of height $\omega$ with no cofinal branch as the only interesting case (every tree of finite height contains a cofinal branch). We then introduce a hierarchy of these trees and prove the wqo property by transfinite induction along this hierarchy.

\begin{lemma}\label{lem:p1-1} Let $S,T\in \mathsf T_\omega$.
    \begin{enumerate}[label={\textup{(\roman*)}}]
        \item\label{item:p1-1-a} If $\h(S)<\omega$ and $\h(S)\le\h(T)$, then $S\preceq_h T$.
        \item\label{item:p1-1-b} If $\h(S)\le \omega$, and $T$ contains an infinite branch, then $S\preceq_h T$. 
    \end{enumerate}
\end{lemma}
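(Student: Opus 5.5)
The idea is to send every node of $S$ to a node of a single long chain in $T$, chosen by level. A chain in $S$ is then mapped strictly increasingly, because levels strictly increase along chains in $S$. Both parts reduce to finding a chain $C\subseteq T$ that contains a node at every level $n<\h(S)$. Once we have it, we define $\varphi\colon S\to T$ by letting $\varphi(x)$ be the unique node of $C$ at level $|x|_S$. If $x<_S y$, then $|x|_S<|y|_S$. Both images lie on the chain $C$, so they are comparable, and the one of smaller level is below the other. Hence $\varphi(x)<_T\varphi(y)$, so $\varphi$ is a homomorphism, and indeed a level-preserving one, which matches characterization \ref{item:treehom2}.

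For \ref{item:p1-1-a}, write $\h(S)=n<\omega$. Since $\h(T)\ge n$, either the supremum defining $\h(T)$ is $\omega$, or it is a maximum that is at least $n$. In either case there is a node $t\in T$ with $|t|_T\ge n-1$. The set $\inseg t\cup\set t$ is a finite chain, and its levels run through $0,1,\dots,|t|_T$. The reason is that $\inseg t$ is a finite chain containing the root, and each $y\in\inseg t$ has level equal to its position in that chain. So this set contains a node at every level $<n$, and we take it as $C$. If $S$ is empty there is nothing to prove.

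For \ref{item:p1-1-b}, let $B$ be an infinite branch of $T$. We take $C=B$; this works for any $S$ with $\h(S)\le\omega$, since all nodes of $S$ have finite level anyway. The only thing to check is that $B$ has exactly one node at every level $n<\omega$. Uniqueness holds because two distinct comparable nodes have distinct levels. For existence, $B$ is infinite, so it contains nodes of arbitrarily large finite level, since each level of a chain holds at most one node. Take $b\in B$ with $|b|_T\ge n$. Then $\inseg b\cup\set b$ contains a node $y$ at level $n$, as in \ref{item:p1-1-a}. By maximality of $B$ we get $y\in B$: every element of $B$ is comparable with $b$, and the predecessors of $b$ form a chain, so every element of $B$ is comparable with $y$, which puts $y$ in $B$.

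The only mildly delicate step is this last verification that an infinite branch meets every finite level, which uses both maximality of $B$ and the finiteness of the sets $\inseg x$. The rest is bookkeeping with levels.
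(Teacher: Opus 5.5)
Your proof is correct and follows the paper's argument: both send each node of $S$ to the node of the same level on a single sufficiently long chain of $T$. The only minor difference is that in (i) you use the finite chain $\inseg t\cup\set t$ directly instead of a full branch, so you never need to extend it to a maximal chain; you also spell out the level bookkeeping that the paper leaves implicit.
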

\begin{proof}
    Both of the claims follow from the fact that $T$ contains a branch $B$ of length at least $\h(S)$. Define a level-preserving homomorphism $\varphi$ from $S$ to $T$ by mapping every $x\in S$ to the unique element of $B$ at height $|x|_S$. 
\end{proof}

Once we establish the following lemma, we are finished.

\begin{lemma}\label{lem:p1-2}
    The class $\mathsf T_{<\omega}$ is wqo by $\preceq_h$.
\end{lemma}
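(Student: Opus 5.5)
We prove Lemma~\ref{lem:p1-2} by transfinite induction along a rank on well-founded trees, with a minimal bad sequence style argument replaced by an ordinal rank argument. For $T\in\mathsf T_{<\omega}$ define the rank of a node by $\rho_T(x)=\sup\xset{\rho_T(y)+1}{y \text{ an immediate successor of } x}$ (leaves get rank $0$); this is well defined since $T$ has no infinite branch, so the reversed tree order is well-founded. Put $\rho(T)\coloneqq\rho_T(\rr(T))$. The first observation is that for $\preceq_h$ only the rank matters, in the following sense: we show by induction on $\alpha$ that if $\rho(S)\le\rho(T)$ then $S\preceq_h T$. Indeed, if $\rho(S)\le\rho(T)$, map the root of $S$ to the root of $T$; each immediate successor $s$ of $\rr(S)$ has $\rho_S(s)<\rho(S)\le\rho(T)$, so by definition of the supremum there is an immediate successor $t$ of $\rr(T)$ with $\rho_T(t)\ge\rho_S(s)$. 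By the induction hypothesis applied to the subtrees $S_s$ and $T_t$ (whose ranks are smaller), there is a homomorphism $S_s\to T_t$, and we use the axiom of choice to pick one for every $s$. Gluing these maps together with $\rr(S)\mapsto\rr(T)$ gives a homomorphism $S\to T$, since any two comparable nodes of $S$ either lie in a common $S_s$ or one of them is the root.

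With this monotonicity statement in hand the lemma is immediate: given any sequence $T_0,T_1,\dots$ in $\mathsf T_{<\omega}$, the ranks $\rho(T_0),\rho(T_1),\dots$ form a sequence of ordinals, and ordinals are well-ordered, hence wqo; so there are $i<j$ with $\rho(T_i)\le\rho(T_j)$, and therefore $T_i\preceq_h T_j$. (In fact this shows $\preceq_h$ on $\mathsf T_{<\omega}$ is, up to equivalence, a well-order; the converse direction, that $S\preceq_h T$ forces $\rho(S)\le\rho(T)$, also holds, because a homomorphism is strictly monotone and thus $\rho_S(x)\le\rho_T(\varphi(x))$ by induction, but we do not need it.)

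The main point requiring care is the induction itself: it should be phrased as an induction on $\rho(S)$ with the statement "for all $T$ with $\rho(T)\ge\rho(S)$ we have $S\preceq_h T$", so that the inductive step for the subtrees $S_s$ applies to the arbitrary trees $T_t$ of sufficiently large rank. One must also check the supremum step: when $\rho(T)$ is a limit or a successor, $\rho_S(s)<\rho(T)$ guarantees some $t$ with $\rho_T(t)+1>\rho_S(s)$, i.e.\ $\rho_T(t)\ge\rho_S(s)$, which is exactly what is needed. The leaf case $\rho(S)=0$ is trivial (map the single node to the root). Choice is used to select the maps simultaneously for possibly infinitely many $s$.
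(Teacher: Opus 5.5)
Your proof is correct, and it takes a genuinely different and much shorter route than the paper.

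The paper works with a coarse invariant, the type $\type(T)$, built from bushes and leaf replacement. It needs Lemmas~\ref{lem:p1-4}--\ref{lem:p1-8} just to show that this type is always defined and behaves monotonically. Equal type does not decide which of two trees maps into the other: for instance, all finite-height trees have type $0$. So the paper must run a second induction inside each successor type via $\alpha$-collapses (Lemma~\ref{lem:p1-5}, Lemma~\ref{lem:p1-6}\,\ref{item:p1-6c}, Lemma~\ref{lem:p1-10}).

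Your foundation rank is fine enough to be a complete invariant: $S\preceq_h T$ iff $\rho(S)\le\rho(T)$. Hence $\preceq_h$ on $\mathsf T_{<\omega}$ is a prewellordering, and the wqo property is inherited directly from the ordinals. This is essentially the classical comparison of well-founded trees by rank. It proves strictly more than the lemma. Together with Lemma~\ref{lem:p1-1}\,\ref{item:p1-1-b}, it shows that $\mathsf T_\omega$ modulo $\preceq_h$-equivalence is well-ordered, with the trees containing an infinite branch forming the top class. What the paper's approach offers is a structural decomposition of trees into bushes and collapses, but for this statement your argument gets the result more cheaply.

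The points you gloss over are routine:
\begin{itemize}
\item The reversed order is well-founded when there is no infinite branch, since the downward closure of a strictly ascending sequence is an infinite branch; then apply dependent choice.
\item $\rho(S_s)=\rho_S(s)$.
\item The stray ``induction on $\alpha$'' in your first paragraph is undefined. Your last paragraph correctly restates it as induction on $\rho(S)$, quantified over all $T$.
\end{itemize}
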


\begin{proof}[Proof of Theorem~\ref{thm:out-tees-order} using Lemma~\ref{lem:p1-2}]
     Let $T_0,T_1,T_2,\dots$ be a sequence of trees. If there are indices $i<j$ such that $T_i$ and $T_j$ contain an infinite branch, then $T_i\preceq_h T_j$ by Lemma~\ref{lem:p1-1}. Otherwise there is an infinite subset $A\subseteq \omega$ such that $T_i$ contains no infinite branch for all $i\in A$. Lemma~\ref{lem:p1-2} then implies that there are $i<j$ in $A$ such that $T_i\preceq_h T_j$. Hence $\mathsf T_\omega$ is wqo.
\end{proof}

From now on, by "tree" we shall mean a tree without an infinite branch. Denote by $\rr(T)$ the root of $T$, and by $\mathcal B(T)$ the set of all \emph{branch-trees} of $T$: subtrees of $T$ that sprout from the immediate successors of $\rr(T)$. We say that a tree $T$ is a \emph{bush} if $\h(T)=\omega$ and $\h(B)<\omega$ for all $B\in \mathcal B(T)$. A canonical example of a bush is a tree that we will denote by $W_1$, whose branch-trees are isomorphic to the natural numbers ($n\in \omega$ with the usual order is a tree of height $n$). More precisely, there is a bijection $f\colon\mathcal B(W_1)\to\omega$ such that every $B\in \mathcal B(W_1)$ is isomorphic to $f(B)$. In this case, $\rr(W_1)$ has branching factor $\omega$ and all other nodes of $W_1$ are either leaves or have branching factor $1$, but the branching factor of nodes in a bush can be any cardinal number, as long as all branch-trees have finite height.

\begin{lemma}\label{lem:p1-3}
    If $S$ and $T$ are bushes, then $S\preceq_h T$.
\end{lemma}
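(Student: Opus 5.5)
We will construct a homomorphism $\varphi\colon S\to T$ directly, one branch-tree at a time. The key observation is that each branch-tree of the bush $S$ has finite height, while $T$ has height $\omega$. So for every finite $n$, $T$ contains a node at level $n+1$, and hence a branch-tree of $T$ of height at least $n$.

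First map the roots: set $\varphi(\rr(S))\coloneqq\rr(T)$. Next fix $B\in\mathcal B(S)$ and let $n\coloneqq\h(B)<\omega$. Since $\h(T)=\omega$, there is a node of $T$ at level $n$, and the path from the root to it passes through some immediate successor $c$ of $\rr(T)$. The branch-tree $C\coloneqq T_c\in\mathcal B(T)$ then satisfies $\h(C)\ge n=\h(B)$. Here we invoke the axiom of choice to pick one such $C_B$ for every $B\in\mathcal B(S)$ simultaneously. Different branch-trees of $S$ may be sent to the same branch-tree of $T$, which is harmless because homomorphisms need not be injective.

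By Lemma~\ref{lem:p1-1}\ref{item:p1-1-a}, there is a homomorphism $\varphi_B\colon B\to C_B$. Concretely, choose a branch of $C_B$ of length at least $\h(B)$ and map each $x\in B$ to the element of that branch at height $|x|_B$; this map is level-preserving. We then define $\varphi$ on $S$ as the union of the maps $\varphi_B$, together with $\varphi(\rr(S))=\rr(T)$.

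It remains to check that $\varphi$ is a homomorphism. Take $x<_S y$. If $x=\rr(S)$, then $\varphi(y)$ lies in some $C_B$, and every node of $C_B$ lies strictly above $\rr(T)$. Otherwise $x$ and $y$ lie in the same branch-tree $B$, since the sprouted subtrees partition $S\setminus\{\rr(S)\}$ and are upward closed. Then $\varphi_B(x)<\varphi_B(y)$ in $C_B$, and this inequality also holds in $T$.

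There is no real obstacle in this argument. The only point needing care is existence: every finite height occurring in $S$ must be matched by some branch-tree of $T$, which follows from $\h(T)=\omega$ together with the fact that all levels are finite. Note that $\h(T)=\omega$ alone would suffice for $T$; the hypothesis that $T$ is a bush is not needed.
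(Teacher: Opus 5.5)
Your proof is correct and is essentially the paper's argument: map $\rr(S)$ to $\rr(T)$, send each $B\in\mathcal B(S)$ into a branch-tree of $T$ of height at least $\h(B)$ via Lemma~\ref{lem:p1-1}\,\ref{item:p1-1-a}, and take the union of these maps. You additionally justify the step the paper only asserts (that such a branch-tree exists because $\h(T)=\omega$) and the homomorphism check via the partition of $S\setminus\set{\rr(S)}$ into branch-trees, and you rightly note that only $\h(T)=\omega$ is needed for $T$.
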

\begin{proof}
     Observe that for every $B\in \mathcal B(S)$, there is $B'\in \mathcal B(T)$ such that $\h(B)\le \h(B')$, and by Lemma~\ref{lem:p1-1} there is a homomorphism $\varphi_B$ from $B$ to $B'$. We construct a homomorphism $\varphi$ from $S$ to $T$ by mapping $\rr(S)$ to $\rr(T)$ and letting $\varphi \restr B\coloneqq \varphi_B$ for $B\in \mathcal B(T)$. %mapping the nodes of $B\in \mathcal B(S)$ into $T$ according to $\varphi_B$.
\end{proof}

Given trees $S$ and $T$, by \emph{replacing a leaf $x\in\mathcal L(T)$ with $S$}, we mean creating a tree $T'\coloneqq (T\setminus\set{x})\cup S$ the order of which is induced by $\le_T$ and $\le_S$, with the addition that for every $y\in T$ that was below $x$, we have $y\le_{T'} z$ for all $z\in S$. Here we assume without loss of generality that $S$ and $T\setminus\set{x}$ are disjoint sets; if not, we can easily amend this by first replacing $S$ and $T$ with the sets $S\times \set{0}$ and $T\times\set{1}$, respectively. 

\begin{definition}[Bushy ordinal]
    We say that an ordinal number $\alpha$ is \emph{bushy} if $\alpha=1$ or if $\alpha$ is a limit ordinal.
\end{definition}

\begin{definition}
    For a tree $T$, we define its \emph{type} $\type(T)\in \ord$ recursively as follows. 
    \begin{enumerate}[label={(\roman*)}]
        \item $\type(T)=0$ if $T$ has finite height (but $|T|$ might be infinite).
        % \item $\type(T)=1$ if $T$ is a bush.
        % \item $\type(T)=2$ if the type of $T$ has not yet been defined, and $T$ can be obtained from a tree of type at most $1$ by replacing some of its leaves with (possibly different) trees of type at most $1$.
        % \item $\type(T)=n+1$ for $1\le n<\omega$ if the type of $T$ has not yet been defined, and $T$ can be obtained from a tree of type $n$ by replacing some of its leaves with (possibly different) trees of type at most $1$.
    \end{enumerate}
     Now let $\alpha$ be a bushy ordinal and define:
    \begin{enumerate}[label={(\roman*)}]
        \setcounter{enumi}{1}
        \item $\type(T)=\alpha$ if the type of $T$ has not yet been defined and $\type(B)<\alpha$ for all $B\in \mathcal B(T)$. We say that $T$ is an \emph{$\alpha$-bush}.
        \item $\type(T)=\alpha+1$ if the type of $T$ has not yet been defined, and $T$ can be obtained from a tree of type at most $\alpha$ by replacing some of its leaves with (possibly different) trees of type at most $\alpha$.
        \item $\type(T)=\alpha+n+1$ for $1\le n< \omega$ if the type of $T$ has not yet been defined, and $T$ can be obtained from a tree of type $\alpha+n$ by replacing some of its leaves with (possibly different) trees of type at most $\alpha$.
    \end{enumerate}
\end{definition}
\begin{remark}
    Every infinite successor ordinal $\beta$ can be written as $\beta=\alpha+n$ for some limit ordinal $\alpha$ and $n\in \omega$. If not, we would be able to construct an infinite decreasing sequence of ordinals, which is impossible. 
\end{remark}

Intuitively, the ordinal $\type(T)$ represents the "minimum number of steps" needed to construct $T$ from a finite-height tree by adding bushes and $\alpha$-bushes, where we require $\type(T)\ge \alpha$ in order to use $\alpha$-bushes.

\begin{observation}
    $T$ is a bush $\iff$ $T$ is a $1$-bush.
\end{observation}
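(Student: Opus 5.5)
The observation follows by unwinding the definition of $\type$ at the bushy ordinal $\alpha=1$. By clause (ii), $T$ is a $1$-bush exactly when two things hold: its type has not already been set by clause (i), and $\type(B)<1$, i.e.\ $\type(B)=0$, for every $B\in\mathcal B(T)$. Since $1$ is the least bushy ordinal, the only earlier clause is (i). Here "$\type(T)$ has not yet been defined" therefore means that $T$ does not have finite height. Because all nodes have finite level, this is the same as $\h(T)=\omega$. Likewise, $\type(B)=0$ means precisely that $B$ has finite height. So a $1$-bush is exactly a tree with $\h(T)=\omega$ all of whose branch-trees have finite height, which is the definition of a bush.

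For the forward direction, let $T$ be a bush. Then $\h(T)=\omega$, so clause (i) does not assign $T$ type $0$. Every $B\in\mathcal B(T)$ has finite height and hence $\type(B)=0<1$. Clause (ii) with $\alpha=1$ then applies, so $T$ is a $1$-bush.

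For the converse, let $T$ be a $1$-bush. Its type was not fixed by (i), so $\h(T)$ is not finite and hence $\h(T)=\omega$. Moreover $\type(B)<1$ forces $\type(B)=0$, i.e.\ $\h(B)<\omega$, for every $B\in\mathcal B(T)$. Thus $T$ is a bush.

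The only step that needs care is reading the recursion correctly: clauses are applied in increasing order of the assigned ordinal, so when clause (ii) is tested at $\alpha=1$, only clause (i) can already have fired. No real obstacle is expected.
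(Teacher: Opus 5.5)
Your proof is correct and matches the paper, which states this observation without proof because it is exactly the definition-unwinding you carry out: only clause (i) can fire before clause (ii) at $\alpha=1$, $\h(T)\le\omega$ always holds, and type $0$ is assigned precisely to finite-height trees. You handle each of these points correctly.
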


We first note that a tree of type $\alpha$ exists for each ordinal $\alpha$. Let $W_0$ be any finite tree, and for $1\le n<\omega$, denote by $W_{n+1}$ the tree obtained from $W_n$ by replacing all of its leaves with copies of $W_1$ (which has been defined earlier). Clearly, $\type(W_n)=n$ for all $n\in\omega$. Denote by $W_\omega$ a tree whose branch-trees are isomorphic to $W_n$ for $n\in\omega$. In general, suppose that $\alpha$ is a limit ordinal and that the trees $W_\beta$ have already been defined for all $\beta<\alpha$. Then let $W_\alpha$ be a tree whose branch-trees are isomorphic to $W_\beta$ for $\beta<\alpha$, and let $W_{\alpha+n+1}$ for $n\in\omega$ be the tree we obtain from $W_{\alpha+n}$ by replacing all of its leaves with copies of $W_\alpha$. From the following lemma, it will easily follow that $\type(W_\gamma)=\gamma$ for all ordinals $\gamma$.

\begin{definition}[Induced subtree]
    A subtree $S\subseteq T$ is an \emph{induced subtree} of $T$ if whenever $x\in T$ satisfies $\rr(S)< x<  y$ for some $y\in S$, then $x\in S$.
\end{definition}
\begin{lemma}\label{lem:p1-4}
    If $S$ is an induced subtree of $T$, then $\type(S)\le \type(T)$.
\end{lemma}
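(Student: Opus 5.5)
We argue by transfinite induction on $\gamma\coloneqq\type(T)$, proving simultaneously for all induced subtrees $S\subseteq T$ that $\type(S)\le\gamma$. Throughout we use the following reading of the recursive definition: types are assigned in increasing order of ordinals. Hence, to show $\type(S)\le\delta$, it suffices to show that $S$ satisfies the defining condition for the value $\delta$; either $S$ received a smaller type earlier, or it receives exactly $\delta$. We also use a basic fact about induced subtrees. If $S$ is an induced subtree of $T$ and $S\subseteq U$, where $U$ is a subtree sprouting from some node, then $S$ is an induced subtree of $U$. Moreover, if $S\subseteq T$ is induced and $R\subseteq T$ is another subtree, then $S\cap R$ together with the appropriate connecting nodes is again induced.

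\emph{Base case, $\gamma=0$.} Here $T$ has finite height. Then $\h(S)\le\h(T)<\omega$, so $\type(S)=0$.

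\emph{Bushy case, $\gamma=\alpha$.} Here $T$ is an $\alpha$-bush. If $\rr(S)\neq\rr(T)$, then $S$ lies inside $T_{\rr(S)}$, which is contained in a single branch-tree $B\in\mathcal B(T)$. Thus $S$ is an induced subtree of $B$, and since $\type(B)<\alpha$, the induction hypothesis gives $\type(S)<\alpha$. If $\rr(S)=\rr(T)$, we argue as follows. Every branch-tree of $S$ sprouts from an immediate successor $y$ of $\rr(T)$; here we use inducedness, so that $y$ is also an immediate successor in $T$. Such a branch-tree is therefore an induced subtree of $T_y\in\mathcal B(T)$, so by induction it has type $<\alpha$. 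Hence $S$ satisfies the $\alpha$-bush condition, and $\type(S)\le\alpha$.

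\emph{Successor case, $\gamma=\alpha+n+1$ with $\alpha$ bushy.} Here $T$ arises from a tree $R$ by replacing a set $X\subseteq\mathcal L(R)$ of leaves with trees $U_x$, for $x\in X$, each of type $\le\alpha$. Moreover, $\type(R)=\alpha+n$, or $\type(R)\le\alpha$ when $n=0$.

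If $\rr(S)\in U_x$ for some $x$, then $S$ is an induced subtree of $U_x$, and by induction $\type(S)\le\alpha$.

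Otherwise $\rr(S)\in R\setminus X$. In this case we define
\[
S_R\coloneqq (S\cap R)\cup\xset{x\in X}{S\cap U_x\neq\emptyset}.
\]
We check that $S_R$ is an induced subtree of $R$ with root $\rr(S)$; this holds because the $R$-path from $\rr(S)$ to such an $x$ is contained in $S$. Similarly, each nonempty $S\cap U_x$ contains $\rr(U_x)$ and is an induced subtree of $U_x$. Each such $x$ is a leaf of $S_R$, and $S$ is exactly $S_R$ with these leaves $x$ replaced by $S\cap U_x$. By induction, $\type(S_R)\le\type(R)$ and $\type(S\cap U_x)\le\alpha$.

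The main obstacle is the following monotonicity claim, which is needed because $S_R$ may have strictly smaller type than $R$, whereas the definition literally demands a tree of type exactly $\alpha+n$.

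\emph{Claim.} If $\type(R')\le\alpha+n$ and some leaves of $R'$ are replaced by trees of type $\le\alpha$, then the result has type $\le\alpha+n+1$.

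To prove the claim, we split according to $\type(R')$. If $\type(R')\le\alpha$, the defining condition for $\alpha+1$ applies, giving type $\le\alpha+1\le\alpha+n+1$. If $\type(R')=\alpha+m$ with $1\le m\le n$, the defining condition for $\alpha+m+1$ applies, giving type $\le\alpha+m+1\le\alpha+n+1$. If no leaf is actually replaced, the result is $R'$ itself, and there is nothing to prove.

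Applying the claim with $R'=S_R$ yields $\type(S)\le\alpha+n+1=\gamma$. By the remark following the definition of type, every ordinal is $0$, a bushy ordinal, or of the form $\alpha+n+1$ with $\alpha$ bushy. So the three cases above are exhaustive, and this completes the induction.
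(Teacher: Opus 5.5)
Your proof is correct and follows essentially the paper's route: induction on $\type(T)$, passing to branch-trees in the bushy case and, in the successor case, splitting $S$ into its part in the base tree (your $S_R$, the paper's $S'=T'\cap S$) and its parts in the replaced trees. Your explicit Claim makes precise the paper's step $\type(S)\le\type(S')+1\le\alpha+n+1$, whose middle inequality can fail when $\type(S')<\alpha$ (e.g.\ $S'$ of finite height with a leaf replaced by an $\alpha$-bush for limit $\alpha$), whereas your case split gives the needed bound $\alpha+n+1$ directly.
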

\begin{proof}
    We proceed by transfinite induction on $\type(T)$. If $\type(T)=0$, then $\h(S)<\omega$, so $\type(S)=0$.
    Suppose that $\type(T)=1$. If $\h(S)<\omega$, then $\type(S)=0$, and if $\h(S)=\omega$, then necessarily $\rr(S)=\rr(T)$. Notice that $S$ is a bush, so $\type(S)=1$.
    
    Suppose that $\type(T)=\alpha$ is a limit ordinal and assume that we have already proved the claim for all $\beta<\alpha$. If $\rr(S)\in B$ for some $B\in \mathcal B(T)$, then from the induction hypothesis $\type(S)\le \type(B)<\type(T)$. Suppose that $\rr(S)=\rr(T)$ and assume $\type(S)\ne \beta$ for any $\beta<\alpha$. Observe that every branch-tree $B$ of $S$ is an induced subtree of some branch-tree $B'$ of $T$, and by the induction hypothesis $\type(B)\le \type(B')<\type(T)=\alpha$. By definition, $S$ is an $\alpha$-bush, so $\type(S)=\alpha$.

    Finally, assume that $\type(T)=\alpha+n+1$, where $n\in\omega$ and $\alpha$ is a bushy ordinal. This means that $T$ can be obtained from a tree $T'$ of type at most $\alpha+n$ by replacing some of its leaves with trees of type at most $\alpha$. Denote these leaves by $L\subseteq T'$, and assume without loss of generality that if $x\in L$ was replaced by a tree $R^{(x)}$, then $\rr(R^{(x)})=x$. Hence $R^{(x)}=T_x$ (the subtree of $T$ sprouting from~$x$). Let $S'=T'\cap S$. If $S'=\emptyset$, then $S$ is an induced subtree of some $T_x$ for $x\in L$. Since $\type(T_x)\le \alpha<\type(T)$, we can apply the induction hypothesis to obtain $\type(S)\le\type(T_x)\le \alpha$. If $S'\ne \emptyset$, then $S'$ is an induced subtree of $T'$, so by the induction hypothesis, $\type(S')\le \type(T')\le \alpha+n$. If $S'=S$, we are done. Suppose not and observe that if $x$ is a leaf of $S'$, but not a leaf in $S$, then $x\in L$ and $S_x$ is an induced subtree of $T_x$. By the induction hypothesis we have $\type(S_x)\le \type(T_x)\le \alpha$. Notice that we can obtain $S$ from $S'$ by replacing each leaf $x$ of $S'$ that is not a leaf in $S$ with $S_x$, so $\type(S)\le \type(S')+1\le \alpha+n+1$.
\end{proof}

\begin{lemma}\label{lem:p1-9}
    If $S$ is a subtree of $T$ and $T\setminus S\subseteq \mathcal L(T)$, then $\type(S)=\type(T)$.
\end{lemma}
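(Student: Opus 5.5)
The plan is to show both inequalities. The inequality $\type(S)\le\type(T)$ should follow from Lemma~\ref{lem:p1-4}, and the reverse $\type(T)\le\type(S)$ by transfinite induction on $\type(S)$.

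\emph{First inequality.} If $T$ is a single node there is nothing to prove, so assume $\rr(T)$ is not a leaf. Then $\rr(T)\in S$, since $T\setminus S\subseteq\mathcal L(T)$, and hence $\rr(S)=\rr(T)$. Moreover $S$ is an induced subtree of $T$: if $\rr(S)<x<y$ with $y\in S$, then $x$ is not a leaf of $T$, so $x\in S$. Lemma~\ref{lem:p1-4} now gives $\type(S)\le\type(T)$. The same observation shows that $T$ arises from $S$ by attaching new leaves as immediate successors of nodes of $S$. This is the form in which I will use the hypothesis, and it is inherited by every sprouting subtree $T_x$, $x\in S$, relative to $S_x$.

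\emph{Reverse inequality, induction on $\type(S)$.}
\begin{itemize}
\item If $\type(S)=0$, then $\h(T)\le\h(S)+1<\omega$, so $\type(T)=0$.
\item Let $\type(S)=\alpha$ be bushy, so $S$ is an $\alpha$-bush. Each branch-tree of $T$ is either a single added leaf (type $0$) or is obtained from a branch-tree $B$ of $S$ by adding leaves. In the latter case the induction hypothesis gives it type $\type(B)<\alpha$. For $\alpha=1$ this reads directly as: finite height stays finite height. Since the branch-trees of $T$ all have type $<\alpha$, either $\type(T)$ was assigned at an earlier stage, and hence is $<\alpha$, or $T$ is an $\alpha$-bush. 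In both cases $\type(T)\le\alpha$.
\item Let $\type(S)=\alpha+n+1$. Write $S$ as obtained from $S'$ with $\type(S')\le\alpha+n$ by replacing leaves $x\in L$ with trees $R^{(x)}$ of type $\le\alpha$, where $\rr(R^{(x)})=x$. Split the added leaves of $T$ according to where they are attached:
\begin{itemize}
\item those hanging from nodes of $S'\setminus L$ give a tree $T'\supseteq S'$;
\item those hanging from nodes of $R^{(x)}$, including $x$ itself, give trees $\widetilde R^{(x)}\supseteq R^{(x)}$.
\end{itemize}
Each pair satisfies the hypothesis of the lemma. So by induction $\type(T')=\type(S')\le\alpha+n$ and $\type(\widetilde R^{(x)})=\type(R^{(x)})\le\alpha$. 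The nodes $x\in L$ are still leaves of $T'$, because nothing was attached to them in $T'$. Hence $T$ is obtained from $T'$ by replacing these leaves with the $\widetilde R^{(x)}$.
\end{itemize}

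The step I expect to be the main obstacle is the final one: concluding $\type(T)\le\alpha+n+1$ from this decomposition. The clauses of the definition require $T'$ to have type \emph{exactly} $\alpha+k$ (clause (iv)) or at most $\alpha$ (clause (iii)), so a small case split is needed:
\begin{itemize}
\item if $\type(T')\le\alpha$, clause (iii) gives $\type(T)\le\alpha+1$;
\item if $\type(T')=\alpha+k$ with $1\le k\le n$, clause (iv) gives $\type(T)\le\alpha+k+1\le\alpha+n+1$.
\end{itemize}
In both cases "$\le$" is justified because either the clause applies or the type was already defined at an earlier stage. Since $\alpha$ is the unique bushy ordinal with $\type(S')\le\alpha+n$ in the relevant range, by the Remark, these two cases are exhaustive. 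Combining the two inequalities gives $\type(S)=\type(T)$.
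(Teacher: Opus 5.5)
Your proof is correct and follows the paper's argument. You use Lemma~\ref{lem:p1-4} for $\type(S)\le\type(T)$, and induction on $\type(S)$ for the reverse inequality with the same successor-case decomposition: $T'$ is $S'$ together with the leaves hanging off $S'\setminus L$, and $\widetilde R^{(x)}=T_x$. Your explicit check that $S$ is an induced subtree, and your case split between $\type(T')\le\alpha$ and $\type(T')=\alpha+k$, only spell out steps the paper leaves implicit.
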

\begin{proof}
     By transfinite induction on $\type(S)$. Observe that the claim holds when $\type(S)\le 1$. Suppose that $\type(S)=\alpha$ is a limit ordinal, and assume we have already proved the claim for all $\beta<\alpha$. From Lemma~\ref{lem:p1-4} we know that $\type(T)\ge \alpha$. For $B\in \mathcal B(T)$, denote by $B_S\in\mathcal B(S)$ the tree $B\cap S$. Notice that $B\setminus B_S\subseteq \mathcal L(B)$. Since $\type(B_S)<\alpha$, we have $\type(B)=\type(B_S)<\alpha$ from the induction hypothesis. By definition, $T$ is an $\alpha$-bush, and the claim holds. 

    Finally, suppose that $\type(S)=\alpha+n+1$, where $n\in\omega$ and $\alpha$ is a bushy ordinal. Hence $S$ can be obtained from a tree $S'$ of type at most $\alpha+n$ by replacing some of its leaves $L\subseteq \mathcal L(S')$ with trees of type at most $\alpha$; assume without loss of generality that $x\in L$ is replaced by $S_x$. Define 
    \[
    T'\coloneqq T\setminus \bigcup\xset{T_x\setminus\set{x}}{x\in L}
    \]
    and observe that $T'\setminus S'\subseteq \mathcal L(T')$, so $\type(T')=\type(S')\le \alpha+n$. Furthermore, $T_x\setminus S_x\subseteq \mathcal L(T_x)$ for all $x\in L$, thus $\type(T_x)=\type(S_x)\le \alpha$. Clearly, we can obtain $T$ from $T'$ by replacing each leaf $x\in L$ with $T_x$, so $\type(T)\le \alpha+n+1$. Lemma~\ref{lem:p1-4} gives us $\type(T)\ge \alpha+n+1$. Therefore $\type(T)=\type(S)$.
\end{proof}

From repeated application of Lemma~\ref{lem:p1-9}, it follows that it is impossible to decrease the type of $T$ by iteratively deleting its leaves any finite number of times.

\begin{lemma}\label{lem:p1-7} Let $T$ be a tree and $\alpha$ a limit ordinal.
    \begin{enumerate}[label={\textup{(\roman*)}}]
        \item\label{item:p1-7a} $\type(T)\le \sup\xset{\type(B)}{B\in \mathcal B(T)}+1$.
        \item\label{item:p1-7b} If $T$ is an $\alpha$-bush, then $\alpha=\type(T)=\sup\xset{\type(B)}{B\in\mathcal B(T)}$.
    \end{enumerate}
\end{lemma}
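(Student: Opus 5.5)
Set $\sigma\coloneqq\sup\xset{\type(B)}{B\in\mathcal B(T)}$. Both parts follow by comparing $\type(T)$ with $\sigma$, using the recursive definition of type together with Lemma~\ref{lem:p1-4}. The starting observation is that every branch-tree $B\in\mathcal B(T)$ is an induced subtree of $T$. Lemma~\ref{lem:p1-4} then gives $\type(B)\le\type(T)$, hence $\sigma\le\type(T)$.

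For \ref{item:p1-7b}, suppose $T$ is an $\alpha$-bush with $\alpha$ a limit ordinal. Then $\type(T)=\alpha$ by definition, and $\type(B)<\alpha$ for every $B\in\mathcal B(T)$, so $\sigma\le\alpha$. For the reverse inequality, assume $\sigma<\alpha$. Since $\alpha$ is a limit, $\sigma+1<\alpha$, so it suffices to show $\type(T)\le\sigma+1$, which is part \ref{item:p1-7a}. That would give $\type(T)<\alpha$, a contradiction. The case $\sigma=0$ is covered as well, since then $\type(T)\le 1<\alpha$. Thus \ref{item:p1-7b} reduces to \ref{item:p1-7a}.

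For \ref{item:p1-7a}, I will show that $T$ gets a type at most $\sigma+1$, with cases on $\sigma$.
\begin{enumerate}[label=(\alph*)]
\item If $\sigma=0$, every branch-tree has finite height, so $T$ is either of finite height (type $0$) or a bush (type $1$).
\item If $\sigma=\alpha+n$ with $\alpha$ bushy and $n\in\omega$, I treat two sub-cases.
\begin{itemize}
\item If no branch-tree attains type $\sigma$, then all branch-trees have type $<\sigma$. I take $T$ to be a $\sigma$-bush when $\sigma$ is bushy. When $\sigma$ is not bushy I argue directly: each $B$ has $\type(B)\le\sigma-1$.
\item Otherwise I use the following construction, which handles both sub-cases. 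Let $T'$ be the tree of height $\le 2$ consisting of the root and its immediate successors. Then $\type(T')=0$. Replacing each leaf $x$ of $T'$ (an immediate successor of the root) by $T_x$, which has type $\le\sigma$, produces $T$.
\end{itemize}
\end{enumerate}

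When $\sigma=\alpha$ is bushy, this construction directly shows $\type(T)\le\alpha+1$ by clause (iii), provided $T$ was not already typed earlier with a smaller value.

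When $\sigma=\alpha+n$ with $n\ge1$, clause (iv) only allows replacing leaves by trees of type $\le\alpha$, so the naive construction fails. This is the main obstacle. The plan is to exploit the structure of each $B$ of type $\le\alpha+n$. Each such $B$ arises from a tree $B'$ of type $\le\alpha+n-1$ by replacing leaves with trees of type $\le\alpha$. Gluing all the $B'$ onto a common new root gives a tree $T'$, and by induction on $n$ (applying the claim to $T'$) we get $\type(T')\le\alpha+n$. We then recover $T$ from $T'$ by replacing leaves with trees of type $\le\alpha$, which yields $\type(T)\le\alpha+n+1=\sigma+1$.

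Limit values of $\sigma$ are bushy, so they fall under the bushy case above. Throughout, "not yet defined" in the recursion only lowers the type, so any upper bound obtained this way is valid. I expect the bookkeeping in the non-bushy successor case, namely the induction on $n$, to be the delicate step.
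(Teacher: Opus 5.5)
Your proof is correct. For non-bushy $\sigma=\alpha+n$ with $n\ge1$ it is essentially the paper's argument. You replace each branch-tree $B$ by a tree $B'$ of type $\le\alpha+n-1$ from which it arises, bound the type of the resulting $T'$ by the claim one level down, and then re-attach the trees of type $\le\alpha$. The paper does this only for the branch-trees of maximal type. Doing it for all of them is harmless if you take $B'=B$ whenever $\type(B)<\alpha+n$.

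Where you genuinely depart from the paper is the bushy case, above all the limit case. The paper carries its transfinite induction through limits:
\begin{itemize}
\item it shows $\type(T)\neq\gamma$ for $\gamma<\beta$ via Lemma~\ref{lem:p1-4};
\item it splits on whether the supremum is attained;
\item it prunes the branch-trees of type $\beta$ and analyses the supremum $\beta'$ of the pruned tree, using either the non-attained argument or the induction hypothesis;
\item only then does it re-attach the deleted $\beta$-bushes.
\end{itemize}
You instead observe that the root together with its immediate successors is a tree of finite height, hence of type $0\le\alpha$. Then $T$ arises from it by replacing those leaves with the branch-trees. So whenever every branch-tree has type $\le\alpha$ for a bushy $\alpha$, clause (iii) of the definition of type gives $\type(T)\le\alpha+1$ outright. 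This needs neither Lemma~\ref{lem:p1-4} nor an induction hypothesis, and it also absorbs the paper's separate base case $\beta=1$. The paper's longer route yields nothing extra that the statement requires.

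Two small bookkeeping points should be made explicit.
\begin{itemize}
\item In your induction on $n$, the supremum for $T'$ may drop below $\alpha$. Either phrase the whole argument as a transfinite induction on $\sigma$, or note that your direct construction already gives $\type(T')\le\alpha+1\le\alpha+n$ whenever all branch-trees of $T'$ have type $\le\alpha$.
\item Your sub-case in which a non-bushy $\sigma$ is not attained is vacuous, since a supremum that is a successor ordinal is always attained.
\end{itemize}
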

\begin{proof}
    We show \ref{item:p1-7a} by transfinite induction on $\beta\coloneqq \sup\xset{\type(B)}{B\in \mathcal B(T)}$. It is easy to see that if $\beta=0$, then $\type(T)\le 1$, and if $\beta=1$, then $\type(T) = 2$. Suppose now that $\beta$ is a successor ordinal of the form $\beta=\gamma+n+1$, where $\gamma$ is bushy and $n\in \omega$. Let $\mathcal B_\beta$ be the set of branch-trees of $T$ of type $\beta$, noting that $\mathcal B_\beta\ne \emptyset$. Each $B\in \mathcal B_\beta$ can be obtained from a tree $B'$ of type at most $\gamma+n$ by replacing some of its leaves with trees of type at most $\gamma$. Denote by $T'$ the tree we obtain from $T$ by replacing each $B\in \mathcal B_\beta$ with $B'$. Observe that 
    \[
    \sup\xset{\type(B)}{B\in \mathcal B(T')}\le \gamma+n<\beta,
    \]
    so $\type(T')\le \gamma+n+1$ by the induction hypothesis. Since $T$ can be obtained from $T'$ by replacing some of its leaves with trees of type at most $\gamma$, we conclude that $\type(T)\le \type(T')+1=\beta+1$. 

    Lastly, suppose that $\beta$ is a limit ordinal. Observe that $\type(T)\ne \gamma$ for any $\gamma<\beta$ because there is always some $B\in \mathcal B(T)$ such that $\type(B)>\gamma$, and by Lemma~\ref{lem:p1-4} we have $\type(T)\ge \type(B)$. If moreover $\type(B)<\beta$ for all $B\in \mathcal B(T)$, then by definition $\type(T)=\beta$. If some branch trees do have type $\beta$, we can delete them, keeping their roots, to obtain a subtree 
    \[
    T'\coloneqq T\setminus \bigcup\bigl\{B\setminus\set{\rr(B)}\,\big|\,B\in \mathcal B(T), \type(B)=\beta\bigr\}.
    \]
    Consider $\beta'\coloneqq \sup\xset{\type(B)}{B\in\mathcal B(T')}$. If $\beta'=\beta$, then the previous argument shows $\type(T')=\beta'=\beta$, and if $\beta'<\beta$, then $\type(T')\le \beta'+1<\beta$ from the induction hypothesis and the fact that $\beta$ is a limit ordinal. Either way $\type(T')\le\beta$, so $\type(T)\le\beta+1$ since $T$ can be obtained from $T'$ by replacing some of its leaves with $\beta$-bushes. This finishes the proof of~\ref{item:p1-7a}.

    To show~\ref{item:p1-7b} let $T$ be an $\alpha$-bush and assume for contradiction that 
    \[
    \beta\coloneqq \sup\xset{\type(B)}{B\in\mathcal B(T)}<\alpha.
    \]
    Then, by \ref{item:p1-7a}, we have $\alpha=\type(T)\le \beta+1$, a contradiction with $\alpha$ being a limit ordinal.
\end{proof}

\begin{lemma}\label{lem:p1-8}
    Every tree $T$ has its type defined. 
\end{lemma}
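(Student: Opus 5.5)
We argue by well-founded induction on the tree, using that $T$ has no infinite branch. Call a node $x\in T$ \emph{bad} if the sprouting subtree $T_x$ has no type defined, and suppose for contradiction that $\rr(T)$ is bad.

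The key local claim is this: if $x$ is a node all of whose immediate successors $y$ have $\type(T_y)$ defined, then $\type(T_x)$ is defined. Granting the claim, if $x$ is bad then some immediate successor of $x$ is bad. Starting from $\rr(T)$ and using dependent choice, we then obtain an infinite chain $x_0<x_1<\cdots$ of bad nodes. This chain extends to a branch, and that branch is infinite, contradicting the assumption that $T$ has no infinite branch. Hence $\rr(T)$ is not bad, so $\type(T)=\type(T_{\rr(T)})$ is defined.

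To prove the claim, let $R=T_x$. If $x$ is a leaf, or more generally if $\h(R)<\omega$, then $\type(R)=0$. Otherwise every $B\in\mathcal B(R)$ is some $T_y$ with $y$ an immediate successor of $x$, so each $\type(B)$ is defined. Put $\beta\coloneqq\sup\xset{\type(B)}{B\in\mathcal B(R)}$; this is an ordinal because $\mathcal B(R)$ is a set.

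The plan is to rerun the proof of Lemma~\ref{lem:p1-7}\ref{item:p1-7a}, reading it as a proof that the type \emph{exists} and is at most $\beta+1$. That proof only manipulates the branch-trees, whose types are known, and builds auxiliary trees $T'$ with smaller suprema, to which the induction hypothesis (on $\beta$) applies. So we formally prove, by induction on $\beta$, the statement: "if every branch-tree of $R$ has a type and their supremum is $\beta$, then $\type(R)$ is defined and is $\le\beta+1$."

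The cases go as follows.
\begin{itemize}
\item If $\beta=0$, the tree $R$ is either of finite height or a bush, so $\type(R)\le 1$.
\item If $\beta=\gamma+n+1$ is a successor, replace each branch-tree of type $\beta$ by its predecessor tree $B'$. This gives a tree $R'$ whose branch-trees have types $\le\gamma+n$, so by induction $R'$ has a type. Then $R$ arises from $R'$ by replacing leaves with trees of type $\le\gamma$, so $R$ falls under clause (iii) or (iv) of the definition, possibly at an earlier stage.
\item If $\beta$ is a limit and no branch-tree has type $\beta$, then $R$ is a $\beta$-bush unless it was typed earlier.
\item If $\beta$ is a limit and some branch-trees have type $\beta$, delete those branch-trees except their roots. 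The resulting tree is typed with type $\le\beta$, and $R$ is obtained from it by replacing leaves with trees of type $\le\beta$, which gives type $\le\beta+1$.
\end{itemize}

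The main obstacle is making sure Lemma~\ref{lem:p1-7}'s argument does not secretly presuppose that $\type(T)$ exists. For instance, it uses Lemma~\ref{lem:p1-4} to compare $\type(T)$ with the types of its branch-trees, and it uses "the type is defined at the first applicable stage." I would handle this by phrasing every step as "$R$ satisfies the hypothesis of clause (ii)/(iii)/(iv) at stage $\le\beta+1$, hence its type has been defined by that stage." Such a statement needs only an upper bound, never $\type(R)$ itself.

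In the clause (iii)/(iv) steps, one also has to check that the base tree and the replacing trees have the required types. This follows from the induction hypothesis applied to trees with strictly smaller suprema; those trees are built from subtrees of typed trees, and Lemma~\ref{lem:p1-4} applies to them because they already have types.
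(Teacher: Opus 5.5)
Your proof is correct and follows the paper's argument: a bad node must have a bad immediate successor by Lemma~\ref{lem:p1-7}\,\ref{item:p1-7a}, which produces an infinite branch. Your rereading of that lemma's proof as an existence statement, where every step needs only an upper bound and never $\type(R)$ itself, just makes explicit what the paper uses tacitly. One small slip: your case list skips $\beta=1$. It is not of the form $\gamma+n+1$ with $\gamma$ bushy, since a bush has no predecessor tree via clauses (iii)/(iv). It is handled exactly like your last bullet: cut the type-$1$ branch-trees down to their roots to get a tree of type at most $1$, then reattach them via clause (iii) to get type at most $2$.
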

\begin{proof}
    Assume for contradiction that $\type(T)$ is not defined, and call a node $x\in T$ \emph{bad} if $\type(T_x)$ is not defined. Since $T=T_{\rr(T)}$, the root of $T$ is bad. Observe that Lemma~\ref{lem:p1-7}\,\ref{item:p1-7a} implies that whenever a node $x\in T$ is bad, it has an immediate successor $x'$ that is also bad; otherwise, we could bound $\type(T_x)$ from above via the types of the subtrees sprouting from the immediate successors of $x$. This allows us to define an infinite branch by $x_0\coloneqq \rr(T)$ and $x_{i+1}\coloneqq x_i'$ for $i\ge 0$, contradicting the fact that $T$ contains no infinite branches. 
\end{proof}

\begin{definition}[Collapse]
    Let $\alpha$ be a bushy ordinal. The $\alpha$-\emph{collapse} of a tree $T$ is the subtree of $T$ defined as
    \[
    \col_\alpha(T)\coloneqq\xset{x\in T}{\type(T_x)\ge \alpha}.
    \]
\end{definition}

Notice that Lemma~\ref{lem:p1-4} implies that $\col_\alpha(T)$ is an induced subtree of $T$ with the property that whenever $x\in \col_\alpha(T)$, then $y\in \col_\alpha(T)$ for all $y<_Tx$.

\begin{observation}\label{obs:p1-1}
    If $x$ is a leaf of $\col_\alpha(T)$, then $T_x$ is an $\alpha$-bush.
\end{observation}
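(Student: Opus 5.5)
The plan is to read the statement directly off the definitions, using the fact that $x$ lies in $\col_\alpha(T)$ while none of its immediate successors do. First I will record what the hypothesis gives.

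Since $x\in\col_\alpha(T)$, we have $\type(T_x)\ge\alpha\ge 1$. In particular $T_x$ does not have finite height, so $x$ is not a leaf of $T$. The branch-trees of $T_x$ are exactly the trees $T_y$ with $y$ an immediate successor of $x$ in $T$. Each such $y$ satisfies $y>_T x$, and $x$ is a leaf of $\col_\alpha(T)$; by the remark after the definition of collapse, $\col_\alpha(T)$ is closed downward. If some immediate successor $y$ were in $\col_\alpha(T)$, then $x$ would not be a leaf of $\col_\alpha(T)$. Hence $y\notin\col_\alpha(T)$, i.e.\ $\type(T_y)<\alpha$, for every $B=T_y\in\mathcal B(T_x)$.

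The key point is that the type of $T_x$ is assigned by the recursive definition at the least stage at which some clause applies. We already know that no stage below $\alpha$ assigns it, because $\type(T_x)\ge\alpha$. At stage $\alpha$, which is bushy, clause (ii) applies, since every $B\in\mathcal B(T_x)$ has $\type(B)<\alpha$. Therefore $\type(T_x)=\alpha$ and $T_x$ is an $\alpha$-bush by definition.

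The main thing to check is that clause (ii) really is the clause triggering at $\alpha$, and that no clause for a smaller ordinal applies. The latter is exactly the statement $\type(T_x)\ge\alpha$, which Lemma~\ref{lem:p1-8} guarantees is well defined. For $\alpha=1$ this amounts to saying that $T_x$ has height $\omega$ while all its branch-trees have finite height, i.e.\ $T_x$ is a bush. For $\alpha$ limit, one can additionally confirm consistency with Lemma~\ref{lem:p1-7}: if $\beta=\sup\{\type(B)\mid B\in\mathcal B(T_x)\}$ were $<\alpha$, then \ref{item:p1-7a} would give $\type(T_x)\le\beta+1<\alpha$, a contradiction. So $\beta=\alpha$, matching \ref{item:p1-7b}.
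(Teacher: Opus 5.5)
Your proof is correct and follows the reasoning the paper leaves implicit by stating this as an observation without proof. Since $\type(T_x)\ge\alpha$, no clause assigns $T_x$ a type below stage $\alpha$; since $x$ is a leaf of $\col_\alpha(T)$, every branch-tree $T_y$ of $T_x$ has $\type(T_y)<\alpha$, so clause (ii) fires at the bushy stage $\alpha$. Your appeals to downward closure of the collapse and the consistency check via Lemma~\ref{lem:p1-7} are harmless but not needed.
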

\begin{lemma}\label{lem:p1-5}
    Let $\type(T)=\alpha+n+1$ where $\alpha$ is a bushy ordinal and $n\in \omega$.
    \begin{enumerate}[label={\textup{(\roman*)}}]
        \item\label{item:p1-5a} $\type(\col_\alpha(T))\le \type(T)-1$.
        \item\label{item:p1-5b} If $n\ge 1$, then  $\type(\col_\alpha(T))= \type(T)-1$.
    \end{enumerate}
\end{lemma}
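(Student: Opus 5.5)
We prove \ref{item:p1-5a} by induction on $n$, then get \ref{item:p1-5b} from Lemma~\ref{lem:p1-4} together with a lower-bound argument. Throughout, write $C\coloneqq\col_\alpha(T)$. The basic structural fact is Observation~\ref{obs:p1-1}: every leaf $x$ of $C$ has $T_x$ an $\alpha$-bush. Moreover, for every $y\in T\setminus C$ whose parent lies in $C$, we have $\type(T_y)<\alpha$ by definition of $C$.

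\emph{Case $n=0$.} Here $\type(T)=\alpha+1$, and we must show $\type(C)\le\alpha$. Consider any $x\in C$ with $\type(T_x)\ge\alpha$. I would show directly that $C$ is either of finite height, or is itself an $\alpha$-bush, or has type at most $\alpha$ through Lemma~\ref{lem:p1-7}.

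The cleanest route uses the decomposition of $T$. Write $T$ as a tree $T'$ of type $\le\alpha$ with some leaves $L$ replaced by trees $R^{(x)}=T_x$ of type $\le\alpha$. I claim $C\subseteq T'$. Indeed, a node $y\in T_x\setminus\{x\}$ has $\type(T_y)\le\type(T_x)\le\alpha$ by Lemma~\ref{lem:p1-4}. It lies in $C$ only if $\type(T_y)=\alpha$. Since $T_y$ is an induced subtree of $T_x$, equality forces (for limit $\alpha$) $T_x$ to be an $\alpha$-bush containing a branch-tree of type $\alpha$, contradicting Lemma~\ref{lem:p1-7}\,\ref{item:p1-7b}. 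For $\alpha=1$ it would make $T_x$ a bush with a branch-tree of infinite height, which is impossible. So $C\cap T_x\subseteq\{x\}$, hence $C\subseteq T'$, and $C$ is an induced subtree of $T'$. Lemma~\ref{lem:p1-4} then gives $\type(C)\le\type(T')\le\alpha$.

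\emph{Inductive step} for $\type(T)=\alpha+n+1$ with $n\ge1$. Write $T$ from $T'$, where $\type(T')=\alpha+n$, by replacing leaves $L$ with trees $T_x$ of type $\le\alpha$. By the same argument, $C\cap T_x\subseteq\{x\}$. A node $y\in T'$ has $(T')_y$ obtained from $T_y$ by cutting at the nodes of $L$. I would show $\col_\alpha(T)=\col_\alpha(T')\cup\{x\in L:\type(T_x)=\alpha\}$. The second set consists of leaves of $\col_\alpha(T)$ sitting above leaves of $T'$. Hence $C$ is obtained from $\col_\alpha(T')$ by attaching extra leaves, and Lemma~\ref{lem:p1-9} (leaf additions preserve type) applies, with some care, since the added leaves hang below former leaves. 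The induction hypothesis for $T'$ gives $\type(\col_\alpha(T'))\le\alpha+n-1$ when $n\ge1$, which yields \ref{item:p1-5a}.

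The main obstacle is precisely this comparison of $\col_\alpha(T)$ with $\col_\alpha(T')$. The type of $T_y$ for $y\in T'$ differs from that of $(T')_y$, so nodes may enter the collapse. I would handle this by noting that $\type(T_y)\ge\alpha$ iff $\type((T')_y)\ge\alpha$ or some $x\in L$ above $y$ has $\type(T_x)=\alpha$. In the latter case $y$ lies below a new leaf, and the needed extension is a single replacement step. That step costs at most $+1$, which is absorbed because the induction hypothesis bounds by $\alpha+n-1$.

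For \ref{item:p1-5b}, I need $\type(C)\ge\alpha+n$. $T$ is obtained from $C$ by replacing leaves of $C$ (which are $\alpha$-bushes) with themselves, and by hanging trees of type $<\alpha$ on nodes of $C$. The latter can be arranged as a single replacement of type $\le\alpha$ at each leaf, after passing to a tree with extra leaves, using Lemma~\ref{lem:p1-9}. Hence $\type(T)\le\type(C)+1$. So if $\type(C)<\alpha+n$, then $\type(T)<\alpha+n+1$, a contradiction. Here $n\ge1$ ensures $\type(C)\ge\alpha$, so the successor-step definition applies.
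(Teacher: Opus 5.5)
Your case $n=0$ of \ref{item:p1-5a} is the paper's proof of \ref{item:p1-5a}. You even justify $C\cap T_x\subseteq\{x\}$ more carefully than the paper's one-line appeal to Lemma~\ref{lem:p1-4}. Your argument for \ref{item:p1-5b} is also the paper's: add the roots of the short sprouts as extra leaves (Lemma~\ref{lem:p1-9}), then do one replacement step with trees of type at most $\alpha$. Strictly, that step gives $\type(T)\le\max(\type(C),\alpha)+1$, which already suffices since $\alpha<\alpha+n$. Your remark that $n\ge1$ forces $\type(C)\ge\alpha$ is therefore not needed, and it is not justified as stated.

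The genuine problem is your inductive step for \ref{item:p1-5a}, which rests on a false criterion. It is not true that $\type(T_y)\ge\alpha$ holds iff $\type(T'_y)\ge\alpha$ or some $x\in L$ above $y$ has $\type(T_x)=\alpha$. Infinitely many replaced leaves of type $<\alpha$ can jointly raise the type to $\alpha$. For example, take $\alpha=n=1$:
\begin{itemize}
\item let $T'$ be $W_2$ with one extra immediate successor $y$ of the root, where $y$ carries infinitely many leaves $x_0,x_1,\dots$;
\item obtain $T$ by replacing each $x_k$ with a chain of $k+1$ nodes, and the leaves of the $W_2$-part with copies of $W_1$.
\end{itemize}
Then $\type(T')=2$ and $\type(T)=3$. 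Moreover $T_y$ is a bush, so $y\in\col_1(T)$. Yet $\type(T'_y)=0$, $y\notin L$, and every $T_{x_k}$ has finite height. So your description of $C$ via $\col_\alpha(T')$ plus a single replacement below new type-$\alpha$ leaves misses nodes like $y$.

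The detour can be repaired, but it is unnecessary: nothing in your $n=0$ argument used $n=0$. For every $n$, the definition provides $T'$ of type at most $\alpha+n$ whose replaced sprouts have type at most $\alpha$. You yourself note that $C\cap T_x\subseteq\{x\}$ holds by the same argument. Hence $C$ is an induced subtree of $T'$, and Lemma~\ref{lem:p1-4} gives $\type(C)\le\alpha+n$ at once. This is exactly how the paper proves \ref{item:p1-5a}, with no induction on $n$.
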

\begin{proof}
    \ref{item:p1-5a} We know that $T$ can be obtained from a tree $T'\subseteq T$ of type at most $\alpha+n$ by replacing some of the leaves of $T'$ with trees of type at most $\alpha$. Note that Lemma~\ref{lem:p1-4} implies that $\col_\alpha(T)\subseteq T'$, and thus also $\type(\col_\alpha(T))\le \alpha+n$.
    
    \ref{item:p1-5b} Denote $S\coloneqq \col_\alpha(T)$ and suppose for contradiction that $\type(S)\le\alpha+n-1$. Let $S'\subseteq T$ be the tree obtained from $S$ by adding to it the "roots of short deleted sprouts." More formally, for each $x\in S\setminus \mathcal L(S)$, we add all $y\in T\setminus S$ that are immediate successors of $x$ in $T$. Note that every $y\in S'\setminus S$ satisfies $\type(T_y)<\alpha$. By Lemma~\ref{lem:p1-9}, we have $\type(S')=\type(S)\le \alpha+n-1$. By our previous remark and Observation~\ref{obs:p1-1} we have that whenever $x\in \mathcal L(S')$, then $\type(T_x)\le \alpha$. Notice that if we now replace each leaf $x$ of $S'$ by $T_x$, then we obtain $T$. Therefore $\type(T)\le \type(S')+1\le\alpha+n$, a contradiction.
\end{proof}

\begin{lemma}\label{lem:p1-6}
    Let $S$ and $T$ be arbitrary trees and let $\alpha$ be a bushy ordinal.
    \begin{enumerate}[label={\textup{(\roman*)}}]
        \item\label{item:p1-6a} If $\type(S)<\type(T)$, then $S\preceq_h T$.
        \item\label{item:p1-6b} If $S$ and $T$ are both $\alpha$-bushes, then  $S\preceq_h T$.
        \item\label{item:p1-6c} If $\col_\alpha(S)\ne\emptyset$ and $\col_\alpha(S)\preceq_h\col_\alpha(T)$, then $S\preceq_h T$.
    \end{enumerate}
\end{lemma}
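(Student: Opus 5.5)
We will prove the three parts simultaneously by transfinite induction on an ordinal $\gamma$: the induction hypothesis at stage $\gamma$ is that \ref{item:p1-6a} holds whenever $\type(T)<\gamma$, \ref{item:p1-6b} holds for every bushy $\alpha<\gamma$, and \ref{item:p1-6c} holds for every bushy $\alpha<\gamma$. Throughout we use that a homomorphism into a subtree $T_w\subseteq T$ is also a homomorphism into $T$. We also use that branch-trees and the $T_x$ are induced subtrees, so Lemma~\ref{lem:p1-4} bounds their types. The base case is Lemma~\ref{lem:p1-3}, which gives \ref{item:p1-6b} for $\alpha=1$. For \ref{item:p1-6a} with $\type(S)=0$, note that $\type(T)\ge1$ forces $\h(T)=\omega$, so Lemma~\ref{lem:p1-1}\,\ref{item:p1-1-a} applies.

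\emph{Part \ref{item:p1-6b} for limit $\alpha$.} Map $\rr(S)\mapsto\rr(T)$. Each $B\in\mathcal B(S)$ has $\type(B)<\alpha$. By Lemma~\ref{lem:p1-7}\,\ref{item:p1-7b} we have $\sup\xset{\type(B')}{B'\in\mathcal B(T)}=\alpha$, so there is $B'\in\mathcal B(T)$ with $\type(B)<\type(B')<\alpha$. The induction hypothesis for \ref{item:p1-6a} gives $B\preceq_h B'$, and gluing these maps gives $S\preceq_h T$.

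\emph{Part \ref{item:p1-6a}.} We split on $\type(T)$.
\begin{itemize}
\item If $\type(T)=\alpha$ is a limit, then $T$ is an $\alpha$-bush. Every branch-tree of $S$ has type at most $\type(S)<\alpha$, and the same gluing argument as in \ref{item:p1-6b} works.
\item If $\type(T)=\alpha+n+1$ with $\alpha$ bushy and $\type(S)<\alpha$, then $\col_\alpha(T)\ne\emptyset$ because it contains the root. Pick a leaf $z$ of $\col_\alpha(T)$. By Observation~\ref{obs:p1-1}, $T_z$ is an $\alpha$-bush, hence of type $\alpha$, so $S\preceq_h T_z$ by induction.
\item Otherwise $\alpha\le\type(S)\le\alpha+n$, so $\col_\alpha(S)\ne\emptyset$, and it suffices by \ref{item:p1-6c} (with $\alpha<\gamma$) to show $\col_\alpha(S)\preceq_h\col_\alpha(T)$.
\begin{itemize}
\item If $\type(S)=\alpha$, then $S$ is an $\alpha$-bush and $\col_\alpha(S)=\set{\rr(S)}$, which maps trivially.
\item Otherwise Lemma~\ref{lem:p1-5} gives $\type(\col_\alpha(S))\le\type(S)-1\le\alpha+n-1$. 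Also $n\ge1$ here, so $\type(\col_\alpha(T))=\alpha+n$. The induction hypothesis \ref{item:p1-6a} then applies, since this type is smaller than $\type(T)$.
\end{itemize}
\end{itemize}

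\emph{Part \ref{item:p1-6c}.} Take a level-preserving homomorphism $\varphi\colon\col_\alpha(S)\to\col_\alpha(T)$ and extend it. Every $x\in S\setminus\col_\alpha(S)$ has a greatest predecessor $y\in\col_\alpha(S)$; this is where $\col_\alpha(S)\ne\emptyset$ is used. There are two cases for $y$.
\begin{itemize}
\item Suppose $y$ is a leaf of $\col_\alpha(S)$. Then $S_y$ is an $\alpha$-bush, and $\varphi(y)\in\col_\alpha(T)$ lies below some leaf $z$ of $\col_\alpha(T)$. Here $T_z$ is an $\alpha$-bush. Redefine the map on $S_y$ as the homomorphism $S_y\to T_z$ from \ref{item:p1-6b}. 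This preserves the order with the predecessors of $y$, because $z\ge\varphi(y)$.
\item Suppose $y$ is not a leaf of $\col_\alpha(S)$. Let $x'$ be the immediate successor of $y$ with $x\in S_{x'}$, so $\type(S_{x'})<\alpha$. Because $y$ has a successor in $\col_\alpha(S)$, $\varphi(y)$ has a successor $w\in\col_\alpha(T)$. Hence $\type(T_w)\ge\alpha>\type(S_{x'})$, and \ref{item:p1-6a} gives $S_{x'}\preceq_h T_w$.
\end{itemize}
The resulting map is a homomorphism. Comparabilities either stay inside one piece or go from a node of $\col_\alpha(S)$ to a node above it, and the images respect these by construction.

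The main obstacle is making the simultaneous induction well-founded. In \ref{item:p1-6c} the appeals are to \ref{item:p1-6a} with $\type(T_w)$ possibly large. To handle this I would first apply Lemma~\ref{lem:p1-4}/\ref{item:p1-6a} only at a target of type exactly $\alpha$: pick a leaf of $\col_\alpha(T)$ above $w$. Then \ref{item:p1-6c} at $\alpha$ uses only \ref{item:p1-6a} for targets of type $\alpha$ and \ref{item:p1-6b} at $\alpha$. This fixes the ordering: for each bushy $\alpha$, prove \ref{item:p1-6b} at $\alpha$, then \ref{item:p1-6a} for $\type(T)\le\alpha$, then \ref{item:p1-6c} at $\alpha$, then \ref{item:p1-6a} for types $\alpha+n+1$ by inner induction on $n$.
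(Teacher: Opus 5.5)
Your proposal is correct and follows essentially the paper's argument. You use induction on the type of $T$ and glue branch-tree homomorphisms at limit types. At successor types $\alpha+n+1$ you pass to $\col_\alpha$ via Lemma~\ref{lem:p1-5} and extend a homomorphism of the collapses, using \ref{item:p1-6b} on the $\alpha$-bushes at leaves of the collapse and \ref{item:p1-6a} into $\alpha$-bushes for the short sprouts. The differences are only organizational:
\begin{itemize}
\item You make \ref{item:p1-6c} its own step at each bushy $\alpha$, which makes the well-founded order of the simultaneous induction explicit; the paper instead reads \ref{item:p1-6c} off the construction afterwards.
\item You use a level-preserving map together with a chosen leaf of $\col_\alpha(T)$ above each image, where the paper uses a leaf-preserving homomorphism.
\item You find a sprout of type exactly $\alpha$ as a leaf of $\col_\alpha(T)$, where the paper uses an infinite-branch argument.
\end{itemize}
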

\begin{proof}
    We prove \ref{item:p1-6a} and \ref{item:p1-6b}  simultaneously by induction on $\type(T)$. Note that we already have~\ref{item:p1-6a} for $\type(T)=1$ from Lemma~\ref{lem:p1-1}, and~\ref{item:p1-6b} for $\alpha=1$ from Lemma~\ref{lem:p1-3}. Suppose that $\alpha$ is a limit ordinal and that we already have~\ref{item:p1-6a} for all $\type(T)<\alpha$. We claim that if $\type(T)=\alpha$ and $\type(S)\le \alpha$, then $S\preceq_h T$, showing both~\ref{item:p1-6a} and \ref{item:p1-6b} for $\type(T)=\alpha$. Lemma~\ref{lem:p1-4} implies that $\type(B)<\alpha$ for all $B\in \mathcal B(S)$, and by Lemma~\ref{lem:p1-7}\,\ref{item:p1-7b} there is $B'\in \mathcal B(T)$ such that $\type(B)<\type(B')<\alpha$. By the induction hypothesis, there exist homomorphisms $\varphi_B$ from $B$ to $B'$ for each $B\in \mathcal B(S)$. We define a homomorphism $\varphi$ from $S$ to $T$ by mapping $\rr(S)$ to $\rr(T)$ and copying $\varphi_B$ for $x\in B$.

    Next, suppose that $\type(T)=\alpha+n+1$ and $\type(S)\le \alpha+n$, where $\alpha$ is a bushy ordinal and $n\in \omega$. We claim that there is $x\in T$ such that $\type(T_x)=\alpha$. Suppose not: that for all $x\in T$ either $\type(T_x)>\alpha$ or $\type(T_x)<\alpha$. It is then easy to see that there is $z\in T$ such that $\type(T_z)>\alpha$ and $\type(B)<\alpha$ for all $B\in \mathcal B(T_z)$, since otherwise we could construct an infinite branch. But note that $T_z$ satisfies the definition of an $\alpha$-bush, so $\type(T_z)=\alpha$, a contradiction.

    Hence let $x\in T$ be such that $\type(T_x)=\alpha$. If $\type(S)<\alpha$, then by the induction hypothesis for~\ref{item:p1-6a} we have $S\preceq_h T_x$, and if $\type(S)=\alpha$, then the same holds by the induction hypothesis for~\ref{item:p1-6b}. Either way we conclude that $S\preceq_h T$. 

    Therefore, assume that $\type(S)>\alpha$, so necessarily $n\ge1$. Define $C_S\coloneqq \col_\alpha(S)$ and $C_T\coloneqq \col_\alpha(T)$. By Lemma~\ref{lem:p1-5} we have $\type(C_S)<\type(C_T)=\alpha+n$. We can thus invoke the induction hypothesis to obtain a leaf-preserving homomorphism $\varphi$ from $C_S$ to $C_T$. We will extend it to a homomorphism $\varphi'$ from $S$ to $T$. Let $x\in \mathcal L(C_S)$. By Observation~\ref{obs:p1-1}, the subtree sprouting from $x$ in $S$ and the subtree sprouting from $\varphi(x)\in \mathcal L(C_T)$ in $T$ are both  $\alpha$-bushes. By~\ref{item:p1-6b}, there is a homomorphism $\varphi_x$ from $S_x$ to $T_{\varphi(x)}$, and we use it to extend $\varphi$ to the nodes above $x$ in $S$. Finally, let $x\in C_S\setminus \mathcal L(C_S)$ and let $y\in \mathcal L(C_S)$ be a leaf above $x$. If $z\in S\setminus C_S$ is an immediate successor of $x$, then $\type(S_z)<\alpha$, and thus there is a homomorphism $\varphi_z$ from $S_z$ to $T_{\varphi(y)}$, and we let $\varphi'\restr S_z\coloneqq \varphi_z$. It is easy to verify that $\varphi'$ is indeed a homomorphism from $S$ to $T$, finishing the proof of \ref{item:p1-6a} and \ref{item:p1-6b}.
    
    The construction described above demonstrates that \ref{item:p1-6c} holds as well.
\end{proof}

\begin{lemma}\label{lem:p1-10}
    For every ordinal $\alpha$, the class $\mathcal T(\alpha)\coloneqq \xset{T\in \mathsf T_{<\omega}}{\type(T)\le \alpha}$ is wqo by $\preceq_h$.
\end{lemma}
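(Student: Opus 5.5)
We prove the statement by transfinite induction on $\alpha$. Fix an arbitrary sequence $T_0,T_1,\dots$ in $\mathcal T(\alpha)$; we want indices $i<j$ with $T_i\preceq_h T_j$. Since there are only finitely many relevant type values along any descending pattern, we first pass to a subsequence on which the types are non-decreasing. This is possible because ordinals are well-ordered, so every sequence of ordinals has a non-decreasing subsequence. If some $i<j$ has $\type(T_i)<\type(T_j)$, then Lemma~\ref{lem:p1-6}\,\ref{item:p1-6a} gives $T_i\preceq_h T_j$ and we are done. So we may assume all $T_i$ have the same type $\gamma\le\alpha$. By the induction hypothesis it suffices to treat $\gamma=\alpha$, since for $\gamma<\alpha$ the sequence lies in $\mathcal T(\gamma)$.

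We split into cases according to the form of $\alpha$.
\begin{enumerate}[label=(\arabic*)]
    \item If $\alpha=0$, all trees have finite height, and Lemma~\ref{lem:p1-1}\,\ref{item:p1-1-a} applied to a non-decreasing subsequence of the heights finishes the argument.
    \item If $\alpha$ is bushy (so $\alpha=1$ or $\alpha$ is a limit), every $T_i$ is an $\alpha$-bush. Then Lemma~\ref{lem:p1-6}\,\ref{item:p1-6b} gives $T_0\preceq_h T_1$ directly, with no induction needed.
    \item If $\alpha=\beta+1$ with $\beta$ bushy, i.e.\ $n=0$ in $\alpha=\beta+n+1$, we use collapses; this is the main step.
    \item If $\alpha=\beta+n+1$ with $n\ge1$, Lemma~\ref{lem:p1-5}\,\ref{item:p1-5b} gives $\type(\col_\beta(T_i))=\beta+n<\alpha$. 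Each collapse is nonempty, since $\type(T_i)>\beta$ puts the root in it. By the induction hypothesis, $\mathcal T(\beta+n)$ is wqo, so there are $i<j$ with $\col_\beta(T_i)\preceq_h\col_\beta(T_j)$, and Lemma~\ref{lem:p1-6}\,\ref{item:p1-6c} yields $T_i\preceq_h T_j$.
\end{enumerate}

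The expected obstacle is case (3), $\alpha=\beta+1$. There $\col_\beta(T_i)$ has type at most $\beta$ by Lemma~\ref{lem:p1-5}\,\ref{item:p1-5a}, but this may well equal $\beta$, so we need the collapse to be wqo inside $\mathcal T(\beta)$. This is fine: $\beta<\alpha$, so the induction hypothesis still applies to the sequence $\col_\beta(T_i)$. It gives $i<j$ with $\col_\beta(T_i)\preceq_h\col_\beta(T_j)$, and again Lemma~\ref{lem:p1-6}\,\ref{item:p1-6c} finishes, since the collapses are nonempty. Thus cases (3) and (4) are handled uniformly using Lemma~\ref{lem:p1-5}\,\ref{item:p1-5a}, which gives $\type(\col_\beta(T_i))\le\alpha-1<\alpha$ in both.

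The point to double-check is that Lemma~\ref{lem:p1-6}\,\ref{item:p1-6c} is genuinely justified by the construction in its proof for an arbitrary homomorphism between collapses. That construction needs a leaf-preserving homomorphism, which the characterization \ref{item:treehom3} in Definition~\ref{def:tree-hom} supplies from any homomorphism, since we work in $\mathsf T_{<\omega}$ with choice. Finally, to avoid passing to subsequences inside the induction hypothesis improperly, we state the hypothesis in the standard equivalent form: every sequence in $\mathcal T(\gamma)$, $\gamma<\alpha$, has a good pair $i<j$.
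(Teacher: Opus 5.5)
Your proof is correct and is essentially the paper's argument. You use Lemma~\ref{lem:p1-6}\,\ref{item:p1-6a} to reduce to sequences of constant type $\alpha$; you get there via a non-decreasing subsequence, while the paper notes that the whole sequence is then non-increasing. You settle bushy $\alpha$ with Lemma~\ref{lem:p1-6}\,\ref{item:p1-6b}, and for $\alpha=\gamma+n+1$ you apply the induction hypothesis to the nonempty collapses $\col_\gamma(T_i)\in\mathcal T(\gamma+n)$ from Lemma~\ref{lem:p1-5}\,\ref{item:p1-5a}, then finish with Lemma~\ref{lem:p1-6}\,\ref{item:p1-6c}.
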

\begin{proof}
    By transfinite induction on $\alpha$.  Let $T_0,T_1,T_2,\dots$ be a sequence of trees of type at most $\alpha$; we want to find $i<j$ such that $T_i\preceq_h T_j$. If $\alpha=0$, then consider the sequence $\h(T_0),\h(T_1),\h(T_2),\dots$ Since $\omega$ is wqo, there are indices $i<j$ such that $\h(T_i)\le \h(T_j)$ and thus $T_i\preceq_h T_j$ by Lemma~\ref{lem:p1-1}\,\ref{item:p1-1-a}.
    Suppose $\alpha>0$ and assume that we have already proved the claim for all $\beta<\alpha$. If there are indices $i<j$ such that $\type(T_i)<\type(T_j)$, then $T_i\preceq_h T_j$ by Lemma~\ref{lem:p1-6}\,\ref{item:p1-6a}. Hence assume that $\type(T_0)\ge \type(T_1)\ge\cdots$. Now, if there is an infinite subset of indices $A\subseteq \omega$ such that $\type(T_i)<\alpha$ for all $i\in A$, then 
    \[
    \beta\coloneqq \sup\xset{\type(T_i)}{i\in A}=\type(T_{\min A})<\alpha.
    \]
    Therefore $T_i\in \mathcal T(\beta)$ for all $i\in A$, and by the induction hypothesis for $\beta$ we find indices $i<j$ in $A$ such that $T_i\preceq_h T_j$. 

    We can thus assume without loss of generality that $\type(T_i)=\alpha$ for all $i\in \omega$. If $\alpha$ is a bushy ordinal, then $T_i\preceq_h T_j$ for all $i<j$ from Lemma~\ref{lem:p1-6}\,\ref{item:p1-6b} and the claim trivially holds. Therefore, suppose that $\alpha$ is a successor ordinal of the form $\alpha=\gamma+n+1$, where $\gamma$ is bushy and $n\in \omega$. Consider the sequence of (nonempty) collapsed trees $T_0',T_1',T_2',\dots$ where $T_i'=\col_\gamma(T_i)$. Notice that for all $i$ we have $T_i'\in \mathcal T(\gamma+n)$ by Lemma~\ref{lem:p1-5}, and so the induction hypothesis yields indices $i<j$ such that $T_i'\preceq_h T_j'$. By Lemma~\ref{lem:p1-6}\,\ref{item:p1-6c} we have that also $T_i\preceq_h T_j$.
\end{proof}

We can now finally prove that $\mathsf T_{<\omega}$ is wqo by $\preceq_h$, finishing the proof.

\begin{proof}[Proof of Lemma~\ref{lem:p1-2}]
    Let $T_0,T_1,T_2,\dots$ be a sequence of trees without infinite branches. Due to Lemma~\ref{lem:p1-8} there is a sequence of ordinals $\alpha_0,\alpha_1,\alpha_2,\dots$ such that $\alpha_i$ is the type of $T_i$. If we let $\alpha\coloneqq \sup_i \alpha_i$, then clearly $T_i\in \mathcal T(\alpha)$ for all $i\in \omega$, and by Lemma~\ref{lem:p1-10} there are indices $i<j$ such that $T_i\preceq_h T_j$.
\end{proof}

\emergencystretch=1em
\printbibliography
\end{document}